\newenvironment{customthm}[1]
  {\innercustomthm}
  {\endinnercustomthm}
\newenvironment{customcor}[1]
  {\innercustomcor}
  {\endinnercustomcor}
\newenvironment{custompro}[1]
  {\innercustompro}
  {\endinnercustompro}
\newtheorem*{thm*}{Theorem}
\newtheorem{thm}{Theorem}
\newtheorem{lem}[thm]{Lemma}
\newtheorem{pro}[thm]{Proposition}
\newtheorem{cor}[thm]{Corollary}
\newtheorem{ques}[thm]{Question}
\newcommand{\N}{\mathbb{N}}
\newcommand{\col}{\mathrm{col}}
\begin{document}

\title{On the Chromatic Polynomial and Counting DP-Colorings of Graphs}

\author{Hemanshu Kaul\footnote{Department of Applied Mathematics, Illinois Institute of Technology, Chicago, IL 60616. E-mail: {\tt kaul@iit.edu}} \\
Jeffrey A. Mudrock\footnote{Department of Mathematics, College of Lake County, Grayslake, IL 60030. E-mail: {\tt jmudrock@clcillinois.edu}} }

\maketitle

\begin{abstract}

The chromatic polynomial of a graph $G$, denoted $P(G,m)$, is equal to the number of proper $m$-colorings of $G$.  The list color function of graph $G$, denoted $P_{\ell}(G,m)$, is a list analogue of the chromatic polynomial that has been studied since 1992, primarily through comparisons with the corresponding chromatic polynomial. DP-coloring (also called correspondence coloring) is a generalization of list coloring recently introduced by  Dvo\v{r}\'{a}k and Postle.  In this paper, we introduce a DP-coloring analogue of the chromatic polynomial called the DP color function, denoted $P_{DP}(G,m)$, and ask several fundamental open questions about it, making progress on some of them. Motivated by known results related to the list color function, we show that while the DP color function behaves similar to the list color function for some graphs, there are also some surprising differences. For example, Wang, Qian, and Yan recently showed that if $G$ is a connected graph with $l$ edges, then $P_{\ell}(G,m)=P(G,m)$ whenever $m > \frac{l-1}{\ln(1+ \sqrt{2})}$, but we will show that for any $g \geq 3$ there exists a graph, $G$, with girth $g$ such that $P_{DP}(G,m) < P(G,m)$ when $m$ is sufficiently large. We also study the asymptotics of $P(G,m) - P_{DP}(G,m)$ for a fixed graph $G$. We develop techniques to compute $P_{DP}(G,m)$ exactly and apply them to certain classes of graphs such as chordal graphs, unicyclic graphs, and cycles with a chord.  Finally, we make progress towards showing that for any graph $G$, there is a $p$ such that $P_{DP}(G \vee K_p, m) = P(G \vee K_p , m)$ for large enough $m$.

\medskip

\noindent {\bf Keywords.}  graph coloring, list coloring, chromatic polynomial, list color function, DP-coloring.

\noindent \textbf{Mathematics Subject Classification.} 05C15, 05C30, 05C69

\end{abstract}

\section{Introduction}\label{intro}

In this paper all graphs are nonempty, finite, simple graphs unless otherwise noted.  Generally speaking we follow West~\cite{W01} for terminology and notation.  The set of natural numbers is $\N = \{1,2,3, \ldots \}$.  Given a set $A$, $\mathcal{P}(A)$ is the power set of $A$.  For $m \in \N$, we write $[m]$ for the set $\{1,2, \ldots, m \}$.  If $G$ is a graph and $S, U \subseteq V(G)$, we use $G[S]$ for the subgraph of $G$ induced by $S$, and we use $E_G(S, U)$ for the subset of $E(G)$ with one endpoint in $S$ and one endpoint in $U$.  For $v \in V(G)$, we write $d_G(v)$ for the degree of vertex $v$ in the graph $G$, and we write $N_G(v)$ (resp. $N_G[v]$) for the neighborhood (resp. closed neighborhood) of vertex $v$ in the graph $G$.  If $G$ and $H$ are vertex disjoint graphs, we write $G \vee H$ for the join of $G$ and $H$.

\subsection{The Chromatic Polynomial and the List Color Function}

List coloring is a well known variation on the classic vertex coloring problem, and it was introduced independently by Vizing~\cite{V76} and Erd\H{o}s, Rubin, and Taylor~\cite{ET79} in the 1970's.  In the classic vertex coloring problem we wish to color the vertices of a graph $G$ with up to $m$ colors from $[m]$ so that adjacent vertices receive different colors, a so-called \emph{proper $m$-coloring}. The chromatic number of a graph, denoted $\chi(G)$, is the smallest $m$ such that $G$ has a proper $m$-coloring.  For list coloring, we associate a \emph{list assignment}, $L$, with a graph $G$ such that each vertex $v \in V(G)$ is assigned a list of colors $L(v)$ (we say $L$ is a list assignment for $G$).  The graph $G$ is \emph{$L$-colorable} if there exists a proper coloring $f$ of $G$ such that $f(v) \in L(v)$ for each $v \in V(G)$ (we refer to $f$ as a \emph{proper $L$-coloring} of $G$).  A list assignment $L$ is called an \emph{m-assignment} for $G$ if $|L(v)|=m$ for each $v \in V(G)$.  The \emph{list chromatic number} of a graph $G$, denoted $\chi_\ell(G)$, is the smallest $m$ such that $G$ is $L$-colorable whenever $L$ is an $m$-assignment for $G$.  It is immediately obvious that for any graph $G$, $\chi(G) \leq \chi_\ell(G)$.  We say $G$ is \emph{$m$-choosable} if $m \geq \chi_\ell(G)$.

For $m \in \N$, we let $P(G,m)$ be the \emph{chromatic polynomial} of the graph $G$; that is, $P(G,m)$ is equal to the number of proper $m$-colorings of $G$.  It can be shown that $P(G,m)$ is a polynomial in $m$ of degree $|V(G)|$ (see~\cite{B12}).  For example, $P(K_n,m) = \prod_{i=0}^{n-1} (m-i)$, $P(C_n,m) = (m-1)^n + (-1)^n (m-1)$, and $P(T,m) = m(m-1)^{n-1}$ whenever $T$ is a tree on $n$ vertices (see~\cite{W01}).

The notion of chromatic polynomial was extended to list coloring as follows. If $L$ is a list assignment for $G$, we use $P(G,L)$ to denote the number of proper $L$-colorings of $G$. The \emph{list color function} $P_\ell(G,m)$ is the minimum value of $P(G,L)$ where the minimum is taken over all possible $m$-assignments $L$ for $G$.  Since an $m$-assignment could assign the same $m$ colors to every vertex in a graph, it is clear that $P_\ell(G,m) \leq P(G,m)$ for each $m \in \N$.  In general, the list color function can differ significantly from the chromatic polynomial for small values of $m$.  One reason for this is that a graph can have a list chromatic number that is much higher than its chromatic number.  Indeed, for each $m \in \N$, $\chi_\ell(K_{m,t})=m+1$ if and only if $t \geq m^m$.   On the other hand, for large values of $m$, Wang, Qian, and Yan~\cite{WQ17} (improving upon results in \cite{D92} and \cite{T09}) recently showed the following.

\begin{thm} [\cite{WQ17}] \label{thm: WQ17}
If $G$ is a connected graph with $l$ edges, then $P_{\ell}(G,m)=P(G,m)$ whenever $m > \frac{l-1}{\ln(1+ \sqrt{2})}$.
\end{thm}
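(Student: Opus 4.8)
The plan is to fix an arbitrary $m$-assignment $L$ for $G$ and prove the single inequality $P(G,L)\ge P(G,m)$; minimizing the left-hand side over all $m$-assignments then gives $P_\ell(G,m)=P(G,m)$, the reverse inequality being immediate since an $m$-assignment may repeat one fixed $m$-set. The main tool is the spanning-subgraph (inclusion--exclusion) expansion, valid for every list assignment,
\[
P(G,L)=\sum_{F\subseteq E(G)}(-1)^{|F|}\prod_{C}\Bigl|\bigcap_{v\in C}L(v)\Bigr|,
\]
where $C$ ranges over the vertex sets of the connected components of the spanning subgraph $(V(G),F)$; when every list equals a common $m$-set this specializes to Whitney's formula $P(G,m)=\sum_{F}(-1)^{|F|}m^{c(F)}$, with $c(F)$ the number of components of $(V(G),F)$. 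Writing $\bigl|\bigcap_{v\in C}L(v)\bigr|=m-\beta(C)$, where $0\le\beta(C)\le m$ and $\beta(C)=0$ whenever $|C|=1$, the two $F=\varnothing$ terms cancel, and expanding each product $\prod_C(m-\beta(C))$ and regrouping expresses the difference $D:=P(G,L)-P(G,m)$ as a signed sum over choices of pairwise vertex-disjoint connected ``marked'' vertex sets $W_1,\dots,W_k$ (each contributing a factor $\beta(W_i)$) together with an unconstrained choice of the remaining edges. Since the $W_i$ are vertex-disjoint the latter choice factors, yielding a formula of the form
\[
D=\sum_{k\ge1}\frac{(-1)^k}{k!}\sum_{W_1,\dots,W_k}\Bigl(\prod_{i=1}^{k}\beta(W_i)\,Q(G[W_i])\Bigr)\,P\Bigl(G-\bigcup_{i}W_i,\ m\Bigr),
\]
where only sets with $|W_i|\ge 2$ contribute and $Q(H):=\sum_{F:\,(V(H),F)\ \text{connected}}(-1)^{|F|}$ is the signed count of connected spanning subgraphs of $H$ (equivalently the linear coefficient of $P(H,m)$, so $|Q(K_j)|=(j-1)!$).

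Granting this identity, the task becomes to show that the $k\ge1$ terms cannot overcome the dominant term $P(G,m)\sim m^{|V(G)|}$. I would first record the a priori bounds $0\le P(H,m)\le m^{|V(H)|}$ (a proper coloring is a coloring) and an explicit bound $|Q(H)|\le q(|E(H)|)$ for a suitable function $q$. The delicate point is that an individual defect $\beta(W)$ can be as large as $m$, so one cannot estimate $\beta(W_i)$ termwise; instead one must use that $L$ is a genuine $m$-assignment through submodularity-type inequalities bounding $\beta$ of a set by a sum of $\beta$'s of smaller sets along a connected structure --- the prototype being $\beta(\{u,v,w\})\le\beta(\{u,v\})+\beta(\{v,w\})+\beta(\{u,w\})$, which already forces $D\ge 0$ for $G=K_3$ once $m\ge 2$ --- so that the negative (``odd'', higher-order) contributions are charged against the positive (``even'', lower-order) ones and against the $m^{|V(G)|}$ bulk. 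After this bookkeeping, what remains is purely enumerative: a bound, uniform over all graphs with $l$ edges, on the number of connected edge-subsets having a prescribed number of edges (equivalently on the growth of $\sum_{W}|Q(G[W])|$ over connected $W$ through a fixed vertex).

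The main obstacle is making that enumerative bound sharp. A crude count --- at most $\binom{l}{j}$ connected edge-subsets with $j$ edges --- collapses the error into a series with threshold $e^{(l-1)/m}-1<1$, i.e.\ $m>(l-1)/\ln 2$, which is the quality of the earlier bounds; any finite count already yields the qualitative fact that $P_\ell(G,m)=P(G,m)$ for all sufficiently large $m$. To reach the stated constant one needs a Catalan/central-binomial-type refinement of this count, replacing a factor of roughly $2$ per step by roughly $\sqrt 2$ per step; morally this turns $e^{x}-1=\sum_{j\ge1}x^j/j!$ into $\sinh x=\sum_{j\ge1,\ j\ \mathrm{odd}}x^j/j!$, and the resulting requirement $\sinh\bigl((l-1)/m\bigr)<1$ is exactly $m>(l-1)/\operatorname{arcsinh}(1)=(l-1)/\ln(1+\sqrt2)$. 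Establishing this refined enumeration, and checking that the defects $\beta(\cdot)$ together with the alternating signs funnel precisely into that series, is where the real work of the theorem lies.
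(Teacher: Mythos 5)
First, a point of reference: the paper offers no proof of this statement --- it is quoted from the cited work of Wang, Qian, and Yan \cite{WQ17} --- so there is nothing internal to compare your argument against; the relevant benchmark is the proof in that reference (which descends from Donner \cite{D92} and Thomassen \cite{T09} and does indeed start from the expansion you write down).

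Your skeleton is sound. The identity $P(G,L)=\sum_{F\subseteq E(G)}(-1)^{|F|}\prod_{C}\bigl|\bigcap_{v\in C}L(v)\bigr|$ is correct by inclusion--exclusion over monochromatic edge sets, and the regrouping of $D=P(G,L)-P(G,m)$ into a signed sum over pairwise disjoint connected marked sets $W_1,\dots,W_k$ weighted by $\beta(W_i)\,Q(G[W_i])$ and $P(G-\bigcup_i W_i,m)$ is legitimate, because the marked pieces are full components of $(V(G),F)$ and hence the remaining edges of $F$ lie entirely inside $G-\bigcup_i W_i$. However, the proposal stops exactly where the theorem begins. Two essential steps are described but not executed. (1) The control of the defects: you correctly observe that a single $\beta(W)$ can be as large as $m$, so it must be charged against pairwise defects via inequalities such as $\beta(\{u,v,w\})\le\beta(\{u,v\})+\beta(\{u,w\})$, but you never set up the actual charging scheme nor verify that the alternating signs still cooperate after the substitution; without this, even the qualitative inequality $D\ge 0$ for large $m$ is not established. (2) The enumerative bound on $\sum_W |Q(G[W])|$ that produces the constant: you concede that the crude count of $\binom{l}{j}$ connected edge-subsets only yields the conclusion under the stronger hypothesis $m>(l-1)/\ln 2$, and that reaching $\ln(1+\sqrt2)$ requires a ``Catalan/central-binomial-type refinement'' which you do not supply. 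Since the entire content of the theorem, beyond the qualitative statement that $P_{\ell}(G,m)=P(G,m)$ for all sufficiently large $m$, is that specific constant, the argument as written proves at best a weaker result --- and even that is not fully closed because of the gap in step (1). In short: correct framework and a correct guess at the architecture of the real proof, but genuine gaps at precisely the two points you yourself flag as ``where the real work lies.''
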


It is also known that $P_{\ell}(G,m)=P(G,m)$ for all $m \in \N$ when $G$ is a cycle or chordal
(see~\cite{AS90} and~\cite{KN16}).  Moreover, if $P_{\ell}(G,m)=P(G,m)$ for all $m \in \N$, then $P_{\ell}(G \vee K_n,m)=P(G \vee K_n,m)$ for each $n, m \in \N$ (see~\cite{KM18}). Thomassen~\cite{T09} gives a survey of known results and open questions on the list color function.

\subsection{DP-Coloring}

In 2015, Dvo\v{r}\'{a}k and Postle~\cite{DP15} introduced DP-coloring (they called it correspondence coloring) in order to prove that every planar graph without cycles of lengths 4 to 8 is 3-choosable.  DP-coloring has been extensively studied over the past 4 years (see e.g.,~\cite{B16,B17,BK182,BK19,BK17,BK18, BK172, KO18, KO182, KY17, LL19, LLYY19, Mo18, M18}). Intuitively, DP-coloring is a generalization of list coloring where each vertex in the graph still gets a list of colors but identification of which colors are different can vary from edge to edge.  Following~\cite{BK17}, we now give the formal definition.  Suppose $G$ is a graph.  A \emph{cover} of $G$ is a pair $\mathcal{H} = (L,H)$ consisting of a graph $H$ and a function $L: V(G) \rightarrow \mathcal{P}(V(H))$ satisfying the following four requirements:

\vspace{5mm}

\noindent(1) the set $\{L(u) : u \in V(G) \}$ is a partition of $V(H)$; \\
(2) for every $u \in V(G)$, the graph $H[L(u)]$ is complete; \\
(3) if $E_H(L(u),L(v))$ is nonempty, then $u=v$ or $uv \in E(G)$; \\
(4) if $uv \in E(G)$, then $E_H(L(u),L(v))$ is a matching (the matching may be empty).

\vspace{5mm}

Suppose $\mathcal{H} = (L,H)$ is a cover of $G$.  We say $\mathcal{H}$ is \emph{$m$-fold} if $|L(u)|=m$ for each $u \in V(G)$.  An $\mathcal{H}$-coloring of $G$ is an independent set in $H$ of size $|V(G)|$.  It is immediately clear that an independent set $I \subseteq V(H)$ is an $\mathcal{H}$-coloring of $G$ if and only if $|I \cap L(u)|=1$ for each $u \in V(G)$.  The \emph{DP-chromatic number} of a graph $G$, $\chi_{DP}(G)$, is the smallest $m \in \N$ such that $G$ admits an $\mathcal{H}$-coloring for every $m$-fold cover $\mathcal{H}$ of $G$.

Given an $m$-assignment, $L$, for a graph $G$, it is easy to construct an $m$-fold cover $\mathcal{H}$ of $G$ such that $G$ has an $\mathcal{H}$-coloring if and only if $G$ has a proper $L$-coloring (see~\cite{BK17}).  It follows that $\chi_\ell(G) \leq \chi_{DP}(G)$.  This inequality may be strict since it is easy to prove that $\chi_{DP}(C_n) = 3$ whenever $n \geq 3$, but the list chromatic number of any even cycle is 2 (see~\cite{BK17} and~\cite{ET79}).

We now briefly discuss some similarities between DP-coloring and list coloring.  First, notice that like $m$-choosability, the graph property of having DP-chromatic number at most $m$ is monotone. The \emph{coloring number} of a graph $G$, denoted $\col(G)$, is the smallest integer $d$ for which there exists an ordering, $v_1, v_2, \ldots, v_n$, of the elements in $V(G)$ such that each vertex $v_i$ has at most $d-1$ neighbors among $v_1, v_2, \ldots, v_{i-1}$.  It is easy to prove that $\chi_\ell(G) \leq \chi_{DP}(G) \leq \col(G)$. Molloy~\cite{Mo18} has shown that Kahn's~\cite{K96} seminal result that list edge-chromatic number of a simple graph asymptotically equals to the edge-chromatic number holds for DP-coloring as well. Thomassen~\cite{T94} famously proved that every planar graph is 5-choosable, and Dvo\v{r}\'{a}k and Postle~\cite{DP15} observed that the DP-chromatic number of every planar graph is at most 5.  Also, Molloy~\cite{M17} recently improved a theorem of Johansson by showing that every triangle-free graph $G$ with maximum degree $\Delta(G)$ satisfies $\chi_\ell(G) \leq (1 + o(1)) \Delta(G)/ \log(\Delta(G))$.  Bernshteyn~\cite{B17} subsequently showed that this bound also holds for the DP-chromatic number.  DP-coloring has also been used to prove results about list coloring.  Indeed, the original motivation for DP-coloring was a list coloring problem~\cite{DP15}, and in general, DP-coloring can provide an advantage in inductive proofs (over working directly with list coloring) in that it provides a stronger inductive hypothesis which allows for more flexibility in the proof (see e.g.~\cite{BK182}).

On the other hand, Bernshteyn~\cite{B16} showed that if the average degree of a graph $G$ is $d$, then $\chi_{DP}(G) = \Omega(d/ \log(d))$.  This is in stark contrast to the celebrated result of Alon~\cite{A00} which says $\chi_\ell(G) = \Omega(\log(d))$.  It was also recently shown in~\cite{BK17} that there exist planar bipartite graphs with DP-chromatic number 4 even though the list chromatic number of any planar bipartite graph is at most 3~\cite{AT92}.  A famous result of Galvin~\cite{G95} says that if $G$ is a bipartite multigraph and $L(G)$ is the line graph of $G$, then $\chi_\ell(L(G)) = \chi(L(G)) = \Delta(G)$.  However, it is also shown in~\cite{BK17} that every $d$-regular graph $G$ satisfies $\chi_{DP}(L(G)) \geq d+1$.

\subsection{The DP Color Function}

The following definition is now quite natural.  Suppose $\mathcal{H} = (L,H)$ is a cover of graph $G$.  We let $P_{DP}(G, \mathcal{H})$ be the number of $\mathcal{H}$-colorings of $G$.  Then, the \emph{DP color function}, denoted $P_{DP}(G,m)$, is the minimum value of $P_{DP}(G, \mathcal{H})$ where the minimum is taken over all possible $m$-fold covers $\mathcal{H}$ of $G$.  Based upon what is discussed above, we immediately have that for any graph $G$ and $m \in \N$,
$$ P_{DP}(G, m) \leq P_\ell(G,m) \leq P(G,m).$$
Note that if $G$ is a disconnected graph with components: $H_1, H_2, \ldots, H_t$, then $P_{DP}(G, m) = \prod_{i=1}^t P_{DP}(H_i,m)$ \footnote{An analogous property holds for the chromatic polynomial and list color function.}.  So, understanding the DP color function of $G$ amounts to understanding the DP color function of its components.  Due to this fact, we will only consider connected graphs from this point forward unless otherwise noted.

We now present two key questions that led to the results in this paper.  Based upon known results for the list color function, the following question is natural.

\begin{ques} \label{ques: 1}
For every graph $G$, does $P_{DP}(G, m) = P(G,m)$ for sufficiently large $m$?
\end{ques}

Perhaps surprisingly, the answer to Question~\ref{ques: 1} is no in a fairly strong sense.  We will see below that $P_{DP}(G, m) < P(G,m)$ for sufficiently large $m$ whenever $G$ is a graph with girth that is even.  Another natural question, that will be partially addressed in this paper, is:

\begin{ques} \label{ques: 2}
For which graphs $G$ does $P_{DP}(G,m) = P(G,m)$ for every $m \in \N$?
\end{ques}

One could also ask for comparison of $P_{DP}(G,m)$, $P_{\ell}(G, m)$, and $P(G,m)$ for small values of $m$.  Additionally, it is possible for the DP color function to be a useful tool for pursuing open questions about the list color function since it bounds the list color function from below.  For example, Thomassen~\cite{T09} asked if there exists a graph $G$ and an $m > 2$ such that $P_{\ell}(G,m) = 1$.  Clearly, one could make progress on this question by showing $P_{DP}(G,m) > 1$ for certain $G$ and $m \in \N$.

\subsection{Outline of Results and further Open Questions}

We now present an outline of the paper while also mentioning some open questions.

We begin Section~\ref{asymptotics} by proving for any $G$ and $m \in \N$: $P_{DP}(G,m) \le  \frac{m^{|V(G)|} (m-1)^{|E(G)|}}{m^{|E(G)|}}$ which is the same as the lower bound on $P(G,m)$ when $G$ is bipartite, as claimed by the well-known Sidorenko's conjecture on counting homomorphisms from a bipartite graph \footnote{When $|E(G)|=0$, we take $0^0$ to equal 1} (see~\cite{CL17} for a proof of this restriction of Sidorenko's conjecture and citations therein). It would be natural to ask whether the same lower bound also holds for DP color function of bipartite graphs, but our upper bound shows that such a conjecture would be possible only if $P_{DP}(G,m) = \frac{m^{|V(G)|} (m-1)^{|E(G)|}}{m^{|E(G)|}}$ for bipartite $G$.  We will use this upper bound along with Theorems~\ref{thm: chordal} and~\ref{thm: onecycle} to prove that this form of Sidorenko's conjecture for the DP color function holds only for trees.

\begin{cor} \label{cor: sid}  For any connected graph $G$, $P_{DP}(G,m) = \frac{m^{|V(G)|} (m-1)^{|E(G)|}}{m^{|E(G)|}}$ for all $m \in \N$ if and only if $G$ is a tree.
\end{cor}

We also use this upper bound to prove the following result.

\begin{thm} \label{thm: evengirth}
If $G$ is a graph with girth $g$ such that $g$ is even, then there is an $N \in \N$ such that $P_{DP}(G,m) < P(G,m)$ whenever $m \geq N$.
\end{thm}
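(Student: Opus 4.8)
The plan is to exhibit, for each graph $G$ with even girth $g$, a single $m$-fold cover $\mathcal{H}$ of $G$ (for all large $m$) whose number of $\mathcal{H}$-colorings is strictly less than $P(G,m)$. The natural candidate: take a shortest cycle $C = v_1 v_2 \cdots v_g v_1$ in $G$, and build a cover that on all edges of $G$ except one edge of $C$ looks like the "identity" cover (the one realizing ordinary $m$-coloring, where matched vertices are the copies with the same color label), but on the remaining edge of $C$ uses a matching that is a cyclic shift by one. On every edge outside $C$, and on $g-1$ of the $g$ edges of $C$, the matching identifies color $c$ at one end with color $c$ at the other; on the last edge of $C$ it identifies color $c$ with color $c+1 \bmod m$. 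Call this cover $\mathcal{H}_C$.

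The key computation is then to count $\mathcal{H}_C$-colorings and compare with $P(G,m)$. First I would reduce to the cycle: an $\mathcal{H}_C$-coloring assigns to each vertex $v$ a color $f(v)\in[m]$, with the constraint that $f(u)\ne f(v)$ for every edge $uv$ not equal to the twisted edge, and $f(v_g)\ne f(v_1)+1$ (say) for the twisted edge. So $P_{DP}(G,\mathcal{H}_C)$ counts exactly the proper $m$-colorings $f$ of $G - v_gv_1$ that additionally avoid the single "shifted" equality on $v_g,v_1$. Comparing to $P(G,m)$, which counts proper colorings of $G$ (i.e.\ those with $f(v_g)\ne f(v_1)$), both are subsets of the proper colorings of $G - v_g v_1$, and it suffices to show that strictly more colorings of $G-v_gv_1$ satisfy $f(v_g)=f(v_1)$ than satisfy $f(v_g)=f(v_1)+1$. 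Equivalently, writing $p_j$ for the number of proper $m$-colorings of $G-v_gv_1$ with $f(v_g)-f(v_1)\equiv j \pmod m$, I need $p_0 > p_1$ for large $m$.

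To get the inequality $p_0 > p_1$ I would use the even-girth hypothesis together with the transfer-matrix / inclusion–exclusion structure of counting colorings of the path $P = v_1\cdots v_g$ inside $G$. The heuristic is that because $C$ has even length, there is a parity obstruction: colorings of the cycle $C$ itself with $f(v_1)=f(v_g)$ behave differently from those with $f(v_g)=f(v_1)+1$, and for $C_g$ with $g$ even one can compute both quantities explicitly (they differ by a term of order $(m-1)^{?}$ with the right sign). The cleanest route is probably to factor the count over $G$ as a sum over colorings of $G - E(P)$ of the number of ways to extend along the path $P$ with prescribed endpoint difference; the path-extension counts are governed by powers of the matrix $J-I$ acting on $\Z_m$, whose eigenvalues are $m-1$ (once) and $-1$ (with multiplicity $m-1$), and the $j=0$ versus $j=1$ discrepancy picks out the $(-1)^{g-1}$ contribution — with $g$ even, $g-1$ is odd, which is exactly what makes $p_0 - p_1 > 0$ for all large $m$. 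I would make this precise by showing $p_0 - p_1$ is, up to a positive polynomial factor coming from the rest of $G$, equal to $(m-1)^{?} - (-1)^{g-1}(\cdots)$ and hence eventually positive; then $N$ is any threshold past which this polynomial difference is positive.

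The main obstacle I expect is controlling the "rest of the graph": $p_0-p_1$ is not simply a property of the cycle but of $G$, and one must argue that the contributions from colorings of $G$ outside $C$ do not swamp or cancel the cycle's parity term. The resolution should be that those outside contributions enter as a common nonnegative weight (for each coloring of $V(G)\setminus\{v_2,\dots,v_{g-1}\}$ extended over the path), so $p_0 - p_1$ is a nonnegative combination of the purely cyclic differences, each of which has the same favorable sign by the eigenvalue computation; one then needs at least one such term to be nonzero, which follows since $G$ is connected and has at least the cycle $C$. Establishing that the weights are genuinely nonnegative and that the cyclic difference has a uniform sign for all large $m$ — rather than oscillating — is the technical heart, and the even-girth condition is used precisely to pin down that sign.
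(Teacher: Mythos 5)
Your twisted-cover construction (identity matchings on every edge except one edge $e=v_gv_1$ of a shortest cycle, where the matching is a cyclic shift) is a sound starting point --- it is essentially the cover underlying the paper's Proposition~\ref{cor: polynomial} --- but your reduction states the required inequality backwards, and the method you propose for proving it has a real gap. Writing $p_0$ for the number of proper $m$-colorings of $G-e$ with $f(v_g)=f(v_1)$ and $p_1$ for those with $f(v_g)=f(v_1)+1$, we have $P_{DP}(G,\mathcal{H}_C)=P(G-e,m)-p_1$ while $P(G,m)=P(G-e,m)-p_0$, so $P_{DP}(G,\mathcal{H}_C)<P(G,m)$ requires $p_1>p_0$, not $p_0>p_1$ as you claim. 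Your eigenvalue heuristic has the opposite sign error: the diagonal-minus-off-diagonal entry of $(J-I)^{g-1}$ is $(-1)^{g-1}=-1$ for even $g$, so for the bare cycle one gets $p_0-p_1=-m<0$. The two errors happen to compensate (what is true, $p_1>p_0$, is also what is needed), but as written neither the target nor the computation is stated correctly.

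The genuine gap is in passing from the bare cycle to all of $G$. Your decomposition into ``colorings of the rest of the graph times path-extension counts governed by $(J-I)^{g-1}$'' fails for two reasons: the internal vertices $v_2,\dots,v_{g-1}$ may have neighbors outside $C$, so the transfer matrix at each step is $J-I$ restricted by a coloring-dependent forbidden set rather than $J-I$ itself; and, more fundamentally, $p_0$ and $p_1$ are sums of extension counts over \emph{disjoint} sets of outer colorings (those giving $v_1,v_g$ equal colors versus colors differing by the shift), so $p_0-p_1$ is not a ``common nonnegative weight times a uniformly signed cyclic difference'' --- that structure simply is not present. The clean repair is to abandon the transfer matrix: by permuting colors globally, every ordered pair of distinct endpoint colors is equally represented among proper colorings of $G-e$ with distinct endpoint colors, so $p_1=P(G,m)/(m-1)$ exactly (this is the paper's Lemma~\ref{lem: ends}), while $p_0=P(G-e,m)-P(G,m)=P(G/e,m)$ by deletion--contraction. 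The inequality $p_1>p_0$ then becomes $mP(G/e,m)<P(G-e,m)$, and establishing \emph{that} for large $m$ requires comparing the first $g-1$ coefficients of two chromatic polynomials, which is exactly where Whitney's Broken Circuit Theorem (the paper's Proposition~\ref{pro: coefficients}) enters: $G/e$ has girth $g-1$, producing a surplus of $t'm^{n-g+2}$ with the favorable sign precisely because $g$ is even. This coefficient input is the missing ingredient; without it (or some substitute) your sketch only proves the inequality when $G$ is itself a cycle. For contrast, the paper proves the theorem by an entirely different route: a uniformly random cover has expected number of colorings $m^{n}(1-1/m)^{|E(G)|}$, and Proposition~\ref{pro: coefficients} shows $P(G,m)$ exceeds this expectation by $t\,m^{n-g+1}+O(m^{n-g})$ with $t\geq 1$ when $g$ is even.
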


In contrast, we also show that a unicyclic graph with an odd cycle has $P_{DP}(G,m) = P(G,m)$ (see Theorem~\ref{thm: onecycle} below). With these results in mind, one might conjecture that all graphs with girth that is odd have a DP color function that eventually equals its chromatic polynomial.  However, we will also show the following in Section~\ref{asymptotics}.

\begin{cor} \label{cor: girth}
For any integer $g \geq 3$ there exists a graph $G$ with girth $g$ and an $N \in \N$ such that $P_{DP}(G,m) < P(G,m)$ whenever $m \geq N$.
\end{cor}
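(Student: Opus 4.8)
The plan is to argue by cases on the parity of $g$. If $g$ is even, then $C_g$ is a graph of girth $g$ whose girth is even, so Theorem~\ref{thm: evengirth} applied to $C_g$ finishes this case immediately. So assume $g \ge 3$ is odd. I would take $G$ to be the graph obtained from a copy of $C_g$ and a copy of $C_{g+1}$ by identifying one vertex $v$ of the former with one vertex of the latter. Since $g+1$ is even, $G$ is a connected simple graph whose blocks are exactly these two cycles, so every cycle of $G$ is one of them and $\mathrm{girth}(G) = \min\{g,\,g+1\} = g$; thus it suffices to show $P_{DP}(G,m) < P(G,m)$ for all sufficiently large $m$.

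For the chromatic polynomial I would use the standard vertex-gluing identity $P(G,m) = P(C_g,m)\,P(C_{g+1},m)/m$, which holds because a proper $m$-coloring of $G$ is the same thing as a pair of proper colorings of $C_g$ and of $C_{g+1}$ agreeing at $v$, together with the elementary fact that, for a connected graph, the number of proper $m$-colorings using a fixed color at a fixed vertex is independent of the chosen color. For the DP color function I would mimic this bookkeeping against a well-chosen cover. By Theorem~\ref{thm: evengirth} there is an $N$ so that for every $m \ge N$ there is an $m$-fold cover $\mathcal{H}'$ of $C_{g+1}$ with $P_{DP}(C_{g+1},\mathcal{H}') = P_{DP}(C_{g+1},m) < P(C_{g+1},m)$. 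I would extend $\mathcal{H}'$ to an $m$-fold cover $\mathcal{H}$ of $G$ by placing the identity matching on every edge of the $C_g$-block (this is clearly a legitimate cover: $v$ retains a single list of size $m$ and all added matchings are matchings). Because $G$ has no edge joining $V(C_g)\setminus\{v\}$ to $V(C_{g+1})\setminus\{v\}$, fixing the color at $v$ splits each $\mathcal{H}$-coloring of $G$ into an extension over the $C_g$-block and an independent extension over the $C_{g+1}$-block; and since the $C_g$-block carries the trivial cover, the number of extensions of a color $c$ at $v$ over that block equals the number of proper $m$-colorings of $C_g$ with $v$ colored $c$, which is $P(C_g,m)/m$ regardless of $c$. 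Summing over the $m$ choices of color at $v$ then gives
\[
P_{DP}(G,m)\;\le\;P_{DP}(G,\mathcal{H})\;=\;\frac{P(C_g,m)}{m}\,P_{DP}(C_{g+1},\mathcal{H}')\;=\;\frac{P(C_g,m)}{m}\,P_{DP}(C_{g+1},m).
\]

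Comparing this with $P(G,m) = \frac{P(C_g,m)}{m}P(C_{g+1},m)$ gives
\[
P(G,m) - P_{DP}(G,m)\;\ge\;\frac{P(C_g,m)}{m}\bigl(P(C_{g+1},m) - P_{DP}(C_{g+1},m)\bigr),
\]
and for $m \ge \max\{N,3\}$ both factors on the right are positive: $P(C_g,m) = (m-1)^g - (m-1) > 0$ since $m \ge 3$ and $g$ is odd, and the difference in parentheses is positive by the choice of $\mathcal{H}'$. Hence $P_{DP}(G,m) < P(G,m)$ for all such $m$, which proves the corollary.

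I do not expect any single step to be a serious obstacle; the real content is the construction of $G$ and the observation that the \emph{bad} cover of the even cycle produced by Theorem~\ref{thm: evengirth} can be grafted onto $C_g$ without changing the girth. The one point requiring care is the factorization of $P_{DP}(G,\mathcal{H})$: it is essential that the $C_g$-block be given the trivial cover, for then its per-color contribution is the constant $P(C_g,m)/m$ and factors cleanly out of the sum over colors at $v$, whereas a genuinely twisted cover on that block could make those per-color counts unequal and break the identity. If elsewhere in the paper there is already a lemma describing the behavior of $P_{DP}$ under gluing at a cut vertex, that lemma could replace this piece of bookkeeping.
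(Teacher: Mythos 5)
Your proof is correct, but it takes a genuinely different route from the paper's. Both arguments handle even $g$ by applying Theorem~\ref{thm: evengirth} to $C_g$ itself, and both handle odd $g$ by attaching the even cycle $C_{g+1}$ to the odd cycle $C_g$; the difference is in how the attachment is made and how the deficiency is transferred. The paper glues the two cycles along a shared \emph{edge} (Theorem~\ref{thm: oddgirth}) and proves the strict inequality via Proposition~\ref{cor: polynomial}: one explicitly computes $P(G,m)$ and $P(G-\{e\},m)$ using the clique-sum formula and checks that $P(G-\{e\},m) < \frac{m}{m-1}P(G,m)$, which forces a shifted-matching cover on $e$ to beat the chromatic polynomial. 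You instead glue at a single \emph{cut vertex}, take a worst-case cover of $C_{g+1}$ supplied by Theorem~\ref{thm: evengirth} as a black box, extend it by the trivial cover on the $C_g$-block, and factor $P_{DP}(G,\mathcal{H})$ through the cut vertex using the constancy of per-color counts on a connected block (the same symmetry fact the paper records as Lemma~\ref{lem: ends}). Your bookkeeping is sound: the absence of edges of $G$ (hence of cross-edges of $H$) between the two blocks away from $v$ justifies the factorization, and $P(C_g,m)=(m-1)^g-(m-1)>0$ for $m\ge 3$ makes the transferred gap strictly positive. What the paper's route buys is an explicit threshold ($m \ge 3$) and a construction that feeds directly into the exact formulas of Theorem~\ref{thm: cyclepluschord}; what your route buys is modularity --- it isolates a general grafting principle, namely that a deficient cover of any graph survives identification at a cut vertex with a trivially covered connected graph $K$, the gap being scaled by $P(K,m)/m$ --- at the cost of a nonexplicit $N$ inherited from Theorem~\ref{thm: evengirth}.
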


Since there are examples of large families of graphs that have a DP color function that is eventually strictly smaller than the corresponding chromatic polynomial, the following question is natural.

\begin{ques} \label{ques: polynomial}
For any graph $G$ does there always exist an $N \in \N$ and a polynomial $p(m)$ such that $P_{DP}(G,m) = p(m)$ whenever $m \geq N$?
\end{ques}

It is also natural to study the asymptotics of $P(G,m) - P_{DP}(G,m)$ for any graph $G$.  We end Section~\ref{asymptotics} by studying $P(G,m) - P_{DP}(G,m)$ for arbitrary $G$.  In particular, we prove the following.

\begin{thm} \label{thm: asymptotic}
For any graph $G$ with $n$ vertices,
$$P(G,m)-P_{DP}(G,m) = O(m^{n-2}) \; \; \text{as $m \rightarrow \infty$.}$$
\end{thm}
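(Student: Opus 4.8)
The plan is to fix an arbitrary $m$-fold cover $\mathcal{H}=(L,H)$ of $G$ and show directly that $P(G,m)-P_{DP}(G,\mathcal{H}) = O(m^{n-2})$, with the implied constant depending only on $G$; since this holds for every cover (in particular the minimizing one), and since $P_{DP}(G,m)\le P(G,m)$ always, the bound on $P(G,m)-P_{DP}(G,m)$ follows. The key observation is that the difference $P(G,m)-P_{DP}(G,\mathcal{H})$ counts, with appropriate bookkeeping, the ``extra'' proper colorings that an ordinary list assignment admits but that a general cover may destroy, and this difference should be dominated by colorings that are forced to agree on at least two coordinates in a way an edge-matching can preclude.

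The cleanest route I would take is a direct counting argument via the deletion-contraction philosophy adapted to covers, together with the product formula for disconnected graphs already noted in the excerpt. First I would recall that $P(G,m)$ is a monic polynomial of degree $n$ with $P(G,m)=m^n - |E(G)|\,m^{n-1}+O(m^{n-2})$, so the top two coefficients are pinned down by the vertex and edge counts alone. The crux is then to prove that $P_{DP}(G,\mathcal{H})$ has the \emph{same} two leading terms, i.e.\ $P_{DP}(G,\mathcal{H})=m^n-|E(G)|\,m^{n-1}+O(m^{n-2})$ uniformly over covers $\mathcal{H}$. To see the leading term, note every cover has exactly $m^n$ ways to pick one vertex of $H$ from each $L(u)$, and an $\mathcal{H}$-coloring fails to be chosen precisely when some edge's matching is ``hit.'' For a single edge $uv$, the matching $E_H(L(u),L(v))$ has at most $m$ edges, so the number of selections hit by that edge is at most $m\cdot m^{n-2}=m^{n-1}$; summing over the $|E(G)|$ edges and applying inclusion–exclusion, the first-order correction is exactly $|E(G)|\,m^{n-1}$ up to an $O(m^{n-2})$ error coming from selections hit by two or more edges simultaneously.

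The main technical step, and the one I expect to be the obstacle, is controlling that second-order term uniformly in $\mathcal{H}$: I must show that the number of selections hit by at least two edge-matchings is $O(m^{n-2})$ regardless of how the matchings are arranged. For a fixed pair of distinct edges $e,f\in E(G)$, a selection hit by both lies in the image of a matching on $e$ crossed with a matching on $f$; since each matching has at most $m$ edges and the two edges together touch at least three vertices of $G$ (or exactly four if disjoint), the free coordinates number at most $n-3$ or the two matchings interact on a shared vertex, and in either case the count is $O(m\cdot m\cdot m^{n-4})=O(m^{n-2})$ when $e,f$ are disjoint, and $O(m\cdot m^{n-3})=O(m^{n-2})$ when $e,f$ share a vertex (the shared vertex's color being constrained by both matchings). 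Summing over the $\binom{|E(G)|}{2}$ pairs keeps the bound $O(m^{n-2})$ with a constant depending only on $|E(G)|$ and $n$. Thus both $P(G,m)$ and $P_{DP}(G,\mathcal{H})$ agree in their top two coefficients, their difference is $O(m^{n-2})$ uniformly in $\mathcal{H}$, and taking the minimum over covers yields the theorem.
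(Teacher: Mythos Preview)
Your argument is correct and takes a genuinely different route from the paper. The paper instead invokes a greedy lower bound (its Proposition~\ref{pro: lower}): fixing any ordering $v_1,\dots,v_n$ in which $v_i$ has $d_i$ earlier neighbors, one has $P_{DP}(G,m)\ge\prod_{i=1}^n(m-d_i)$ for every $m$-fold cover, and expanding this product gives $m^n-|E(G)|\,m^{n-1}+O(m^{n-2})$; since $P(G,m)$ has the same two leading coefficients (Proposition~\ref{pro: coefficients}), the difference is $O(m^{n-2})$. Your union-bound argument reaches the same inequality $P_{DP}(G,\mathcal{H})\ge m^n-|E(G)|\,m^{n-1}$ directly from counting selections hit by at least one edge-matching, and in fact your second-order analysis of pairs of edges is unnecessary for the theorem as stated---the first-order bound alone already suffices. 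One small overstatement in your write-up: it is not true that $P_{DP}(G,\mathcal{H})=m^n-|E(G)|\,m^{n-1}+O(m^{n-2})$ for \emph{every} cover (e.g.\ take all matchings empty, giving $P_{DP}(G,\mathcal{H})=m^n$); what your argument actually establishes, and all that is needed, is the one-sided inequality $P_{DP}(G,\mathcal{H})\ge m^n-|E(G)|\,m^{n-1}$ uniformly in $\mathcal{H}$. The paper's greedy proof is slightly shorter; your approach has the merit of making the role of the edge-matchings explicit and lends itself more naturally to attempts at the sharper $O(m^{n-3})$ bound of Question~\ref{ques: asymptotic}.
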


Interestingly, we do not have an example of a graph $G$ such that $P(G,m)-P_{DP}(G,m) = \Theta(m^{n-2})$.  If $G$ is a unicyclic graph on $n$ vertices that contains a cycle of length $4$, we will see below that $P(G,m)-P_{DP}(G,m) = \Theta(m^{n-3})$ (see Theorem~\ref{thm: onecycle}).  This leads us to pose the following question.

\begin{ques} \label{ques: asymptotic}
For any graph $G$ with $n$ vertices, is it the case that $P(G,m)-P_{DP}(G,m) = O(m^{n-3})$ as $m \rightarrow \infty$?
\end{ques}

In Section~\ref{chordal} we show that in contrast to previous results, the DP color function of any chordal graph behaves just like the list color function of the graph (see~\cite{AS90}).

\begin{thm} \label{thm: chordal}
If $G$ is chordal, then $P_{DP}(G,m) = P(G,m)$ for every $m \in \N$.
\end{thm}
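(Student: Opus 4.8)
The plan is to prove this by induction on $|V(G)|$ using a standard simplicial-vertex decomposition, and the crucial point is to show that for a cover $\mathcal{H}$ of a chordal graph, one cannot do better than the constant list assignment. Recall that every chordal graph $G$ with at least two vertices has a simplicial vertex $v$ (a vertex whose neighborhood induces a clique), and that $G - v$ is again chordal. The base case $|V(G)| = 1$ is trivial since $P_{DP}(K_1, m) = m = P(K_1, m)$.

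For the inductive step, let $v$ be a simplicial vertex of $G$, and let $\mathcal{H} = (L, H)$ be an arbitrary $m$-fold cover of $G$; we wish to show $P_{DP}(G, \mathcal{H}) \geq P(G, m)$. First I would observe that the restriction $\mathcal{H}'$ of $\mathcal{H}$ to $G - v$ is an $m$-fold cover of $G - v$, so by the induction hypothesis and Theorem~\ref{thm: chordal} applied to $G-v$ (via $P_{DP}(G-v, m) = P(G-v, m)$) we get $P_{DP}(G - v, \mathcal{H}') \geq P(G - v, m)$. Now I count $\mathcal{H}$-colorings of $G$ by extending each $\mathcal{H}'$-coloring $I'$ of $G - v$: such an $I'$ extends to an $\mathcal{H}$-coloring of $G$ precisely by choosing a vertex of $L(v)$ not matched to any vertex of $I'$ lying in $L(u)$ for $u \in N_G(v)$. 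The key combinatorial fact is that since $N_G(v)$ induces a clique in $G$ of size $d := d_G(v)$, and each edge $vu$ contributes a matching between $L(v)$ and $L(u)$, the chosen vertices of $I'$ in $\bigcup_{u \in N_G(v)} L(u)$ can "block" at most $d$ distinct vertices of $L(v)$ — one per neighbor. Hence each $\mathcal{H}'$-coloring of $G-v$ extends in at least $m - d$ ways, giving
$$
P_{DP}(G, \mathcal{H}) \geq (m - d)\, P_{DP}(G - v, \mathcal{H}') \geq (m-d)\, P(G - v, m) = P(G, m),
$$
where the last equality is the classical chordal deletion identity $P(G, m) = (m - d_G(v))\, P(G - v, m)$ for a simplicial vertex $v$. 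Since $\mathcal{H}$ was arbitrary, $P_{DP}(G, m) \geq P(G, m)$, and combined with the universal inequality $P_{DP}(G, m) \leq P(G, m)$ this finishes the induction.

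The main obstacle, and the only step requiring care, is the counting bound "each $\mathcal{H}'$-coloring extends in at least $m - d$ ways." One must verify that distinct neighbors $u$ of $v$ cannot block distinct vertices of $L(v)$ in a way that exceeds $d$ blocked vertices — but this is immediate since for each neighbor $u$ the set $I' \cap L(u)$ is a single vertex, and the matching on $E_H(L(v), L(u))$ sends it to at most one vertex of $L(v)$; taking the union over the $d$ neighbors blocks at most $d$ vertices of $L(v)$, leaving at least $m - d \geq 0$ available (and if $m \le d$ both sides of the inequality are appropriately nonnegative, so the bound is vacuous or trivial). I would also note at the outset that we may take $m \ge \chi(G) = \col(G)$ for chordal $G$ without loss for the interesting range, though the inequality holds for all $m \in \N$ as stated. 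A subtle point worth a sentence: the restriction of an $m$-fold cover to an induced subgraph is genuinely an $m$-fold cover (conditions (1)–(4) are inherited), so the induction hypothesis legitimately applies.
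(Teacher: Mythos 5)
Your proof is correct and is essentially the same argument as the paper's: the paper runs a single greedy count over the reverse of a perfect elimination ordering, showing each vertex $v_i$ has at least $m-\alpha_i$ available choices in $L(v_i)$ and invoking $P(G,m)=\prod_i(m-\alpha_i)$, which is exactly your simplicial-vertex induction unrolled. Your handling of the small-$m$ range matches the paper's remark that the claim is trivial when $m<\chi(G)$, so no gap remains.
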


Notice that Theorem~\ref{thm: chordal} tells us that graphs of infinite girth (i.e. forests) have a DP color function that equals the corresponding chromatic polynomial for all natural numbers. In Section~\ref{bijections}, this observation motivates a notion of natural bijections between DP-colorings and proper colorings of a graph, and we use this notion to develop some tools that are useful for exactly determining the DP color function. These tools prove particularly useful for studying graphs that are a few edges away from being trees. In Section~\ref{exact} we use these tools to find formulas for the DP color function of connected graphs containing one cycle (i.e., unicyclic graphs).

\begin{thm} \label{thm: onecycle}
Suppose $G$ is a unicyclic graph on $n$ vertices.  Then the following statements hold.
\\
(i) For $m \in \N$, if $G$ contains a cycle on $2k+1$ vertices, then $P_{DP}(G,m) = P(G,m)$.
\\
(ii)  For $m \geq 2$, if $G$ contains a cycle on $2k+2$ vertices, then $$P_{DP}(G,m) = (m-1)^n - (m-1)^{n-2k-2}.$$
\end{thm}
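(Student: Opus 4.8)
The plan is to split the argument into a \emph{leaf-peeling} reduction that strips away the tree part of $G$, and an exact evaluation of $P_{DP}(C_c,m)$ for a single cycle. Write $C$ for the unique cycle of $G$ and set $c=|V(C)|$, so $c=2k+1$ in case (i) and $c=2k+2$ in case (ii). First I would record the trivial upper bound $P_{DP}(G,m)\le P(G,m)$ together with the identity $P(G,m)=(m-1)^{n-c}P(C_c,m)$, which holds because repeatedly deleting a leaf multiplies the chromatic polynomial by $m-1$. For the matching lower bound I claim that every $m$-fold cover $\mathcal H=(L,H)$ of $G$ satisfies $P_{DP}(G,\mathcal H)\ge (m-1)^{\,n-c}\,P_{DP}(C,\mathcal H|_C)$. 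This is proved by induction on $n-c$: if $G\ne C$, pick a leaf $\ell$ with neighbor $p$; since $E_H(L(\ell),L(p))$ is a matching, the chosen vertex of $L(p)$ in any $\mathcal H|_{G-\ell}$-coloring of $G-\ell$ is adjacent to at most one vertex of $L(\ell)$, so that coloring extends to $\ell$ in at least $m-1$ ways, giving $P_{DP}(G,\mathcal H)\ge (m-1)P_{DP}(G-\ell,\mathcal H|_{G-\ell})$; induction then finishes it. Taking minima over covers yields $P_{DP}(G,m)\ge (m-1)^{\,n-c}\,P_{DP}(C_c,m)$.

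Everything now rests on computing $P_{DP}(C_c,m)$. The key observation is that enlarging any edge-matching of a cover to a perfect matching only destroys colorings, so the minimum defining $P_{DP}(C_c,m)$ is attained at a cover in which, after identifying each $L(v_i)$ with $[m]$, every edge-matching is a permutation $\sigma_i$ of $[m]$. I would then ``straighten'' such a cover by the substitution $c_i=(\sigma_{i-1}\cdots\sigma_1)(d_i)$: the constraints along the path $v_1\cdots v_c$ become $d_i\ne d_{i+1}$ for $1\le i\le c-1$, and the closing constraint becomes $d_c\ne\tau^{-1}(d_1)$ where $\tau=\sigma_c\sigma_{c-1}\cdots\sigma_1$. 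Hence $P_{DP}(C_c,\mathcal H)=m(m-1)^{c-1}-N$, where $N$ counts colorings $(d_1,\dots,d_c)$ with consecutive entries distinct and $d_c=\tau^{-1}(d_1)$. Crucially $N$ depends on $\tau$ only through its number $f$ of fixed points: writing $a$ for the number of proper colorings of the path $v_1\cdots v_c$ with $d_1=d_c$ fixed to a given color, and $q$ for the number with $d_1,d_c$ fixed to two given distinct colors, one gets $N=fa+(m-f)q$, and a direct computation gives $a-q=(-1)^{c-1}$. Optimizing over the admissible values $f\in\{0,1,\dots,m-2,m\}$ then gives $P_{DP}(C_c,m)=m(m-1)^{c-1}-m\max(a,q)$, which simplifies to $(m-1)^c-(m-1)=P(C_c,m)$ when $c$ is odd (optimum $f=m$, i.e.\ $\tau=\mathrm{id}$), and to $(m-1)^c-1$ when $c$ is even (optimum $f=0$, i.e.\ $\tau^{-1}$ a derangement, which exists since $m\ge 2$).

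Assembling the pieces: in case (i), $(m-1)^{n-c}P_{DP}(C_c,m)=(m-1)^{n-c}\bigl[(m-1)^c-(m-1)\bigr]=(m-1)^n-(m-1)^{n-2k}$, which equals $P(G,m)$; together with $P_{DP}(G,m)\le P(G,m)$ this forces $P_{DP}(G,m)=P(G,m)$. In case (ii), the lower bound becomes $P_{DP}(G,m)\ge (m-1)^{n-c}\bigl[(m-1)^c-1\bigr]=(m-1)^n-(m-1)^{n-2k-2}$, and the reverse inequality follows from the explicit cover that uses $\sigma_1=\cdots=\sigma_{c-1}=\mathrm{id}$ and a fixed-point-free $\sigma_c$ along the cycle, together with perfect matchings on every tree edge: peeling the leaves contributes exactly $m-1$ each, so this particular cover has exactly $(m-1)^n-(m-1)^{n-2k-2}$ colorings.

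The main obstacle is the cycle computation: one has to justify that it suffices to consider perfect-matching covers, find the change of variables that conjugates an arbitrary such cover to one equal to the identity on all but the closing edge, and then carry out the fixed-point count cleanly — the identity $a-q=(-1)^{c-1}$ is exactly what produces the odd/even dichotomy of the theorem. By contrast, the leaf-peeling inequality and the verification that the exhibited cover is extremal are routine.
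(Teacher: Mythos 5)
Your proposal is correct and is in substance the same argument as the paper's: your ``straightening'' of a perfect-matching cover plus the $a$-versus-$q$ count of destroyed colorings (with $a-q=(-1)^{c-1}$ driving the parity dichotomy) is exactly the content of the paper's Proposition~\ref{pro: treeDP}, Lemma~\ref{lem: ends}, and Lemma~\ref{lem: DPedgedelete}, which the paper applies by deleting a cycle edge from all of $G$ at once (so that $G-\{e\}$ is a tree) rather than leaf-peeling down to the bare cycle first. The only cosmetic difference is that you parametrize by the number of fixed points $f$ of the holonomy permutation $\tau$, whereas the paper bounds each matching edge's contribution by $\max\{a,q\}$ directly; both yield the same extremal covers ($\tau=\mathrm{id}$ for odd cycles, $\tau$ a derangement for even ones).
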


In Section~\ref{exact} we also find exact formulas for the DP color function of a cycle plus a chord, with the answer depending on the parity of the lengths of the two maximal cycles properly contained in such a graph.

Finally, in Section~\ref{join}, we study the DP color function of the join of a complete graph and arbitrary graph $G$. Recently, Bernshteyn, Kostochka, and Zhu~\cite{BK18} showed that for any graph $G$ there exists an $N \leq 3|E(G)|$ such that $\chi_{DP}(G \vee K_p) = \chi(G \vee K_p)$ whenever $p \geq N$. So, it is natural to ask whether taking the join of an arbitrary graph with an appropriate clique makes the chromatic polynomial equal to the DP color function.

\begin{ques} \label{ques: join}
For every graph $G$, does there exist $p,N \in \N$ such that $P_{DP}(G \vee K_p, m) = P(G \vee K_p , m)$ whenever $m \geq N$?
\end{ques}

In Section~\ref{join}, we prove both of the following results the second of which is a partial answer to Question~\ref{ques: join}.

\begin{thm} \label{thm: joinbound}
Suppose $G$ is a graph with $\col(G) \geq 3$.  Then for $m \geq \col(G) + 3$,
$$P_{DP}(K_1 \vee G, m) \geq \min \{P(K_1 \vee G,m), mP_{DP}(G, m-1) + 2(m-\col(G)-2)^{|V(G)|-2} \}.$$
\end{thm}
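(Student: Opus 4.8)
The plan is to fix an arbitrary $m$-fold cover $\mathcal{H}=(L,H)$ of $K_1\vee G$, write $u$ for the vertex of the $K_1$, and bound $P_{DP}(K_1\vee G,\mathcal{H})$ from below by the claimed minimum; since $\mathcal{H}$ is arbitrary this gives the theorem. The first move is to condition on the color of $u$. For each $c\in L(u)$ let $\mathcal{H}_c$ be the cover of $G$ obtained from $\mathcal{H}$ by deleting $c$ together with, for each $v\in V(G)$, the unique vertex of $L(v)$ matched to $c$ (if it exists); then $\mathcal{H}$-colorings of $K_1\vee G$ with $u$ colored $c$ correspond bijectively to $\mathcal{H}_c$-colorings of $G$, so $P_{DP}(K_1\vee G,\mathcal{H})=\sum_{c\in L(u)}P_{DP}(G,\mathcal{H}_c)$. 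Each $\mathcal{H}_c$ has all lists of size $m-1$ or $m$; deleting colors down to size $m-1$ yields a genuine $(m-1)$-fold cover and cannot increase the number of colorings, so $P_{DP}(G,\mathcal{H}_c)\ge P_{DP}(G,m-1)$, giving the base estimate $P_{DP}(K_1\vee G,\mathcal{H})\ge m\,P_{DP}(G,m-1)$. Everything after this is about recovering the additive term $2(m-\col(G)-2)^{|V(G)|-2}$, or else landing at $P(K_1\vee G,m)$.

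Next I would isolate the easy source of slack. Suppose some edge $uv_0$ of $K_1\vee G$ has an imperfect matching, i.e.\ $|L_c(v_0)|=m$ for some $c$. Fix a color $a\in L_c(v_0)$ and split the $\mathcal{H}_c$-colorings of $G$ according to whether $v_0$ receives $a$: those with $v_0\not\mapsto a$ are the colorings of a cover of $G$ all of whose lists have size at least $m-1$, hence number at least $P_{DP}(G,m-1)$; those with $v_0\mapsto a$ can be counted greedily along a $\col(G)$-ordering of $G-v_0$, since when a vertex is colored it has at most $\col(G)$ already-colored neighbors (its at most $\col(G)-1$ back-neighbors together with possibly $v_0$) and a list of size at least $m-1$, giving at least $(m-\col(G)-1)^{|V(G)|-1}$ of them. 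Thus $P_{DP}(G,\mathcal{H}_c)\ge P_{DP}(G,m-1)+(m-\col(G)-1)^{|V(G)|-1}$. As $\col(G)\ge 3$ forces $|V(G)|\ge 3$ and $m\ge\col(G)+3$ gives $m-\col(G)-1\ge 2$, one has $(m-\col(G)-1)^{|V(G)|-1}\ge 2(m-\col(G)-2)^{|V(G)|-2}$, and combining with the base estimate for the other colors finishes this case.

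It remains to treat covers in which every matching $E_H(L(u),L(v))$, $v\in V(G)$, is perfect. Relabelling colors within each $L(v)$, I may assume $L(u)=L(v)=[m]$ and that each $E_H(L(u),L(v))$ is the identity matching; let $\mathcal{M}$ be the induced $m$-fold cover of $G$ retaining only the internal matchings $M_{vv'}$. Then conditioning on $u\mapsto c$ simply deletes color $c$ at every vertex of $G$, so $P_{DP}(K_1\vee G,\mathcal{H})=\sum_{c\in[m]}\#\{\mathcal{M}\text{-colorings of }G\text{ avoiding }c\}=\sum_{\phi}\bigl(m-|\ran(\phi)|\bigr)$, the sum over $\mathcal{M}$-colorings $\phi$ of $G$. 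If every $M_{vv'}$ is contained in the diagonal $\{(i,i):i\in[m]\}$ then every proper coloring of $G$ is an $\mathcal{M}$-coloring, so (using $m-|\ran(\phi)|\ge 0$) the sum is at least $\sum_{\psi}(m-|\ran(\psi)|)$ over proper colorings $\psi$, which equals $P(K_1\vee G,m)$, and we are done. Otherwise some edge $ww'$ of $G$ carries a non-diagonal matching edge $(\alpha_w,\beta_{w'})$ with $\alpha\ne\beta$; then $(\alpha_w,\alpha_{w'})$ is a non-edge, so for any $c\ne\alpha$ the assignment $w\mapsto\alpha,\ w'\mapsto\alpha$ extends to many $\mathcal{H}_c$-colorings of $G$: greedily coloring $G-\{w,w'\}$ gives at least $(m-\col(G)-2)^{|V(G)|-2}$ of them, since each remaining vertex sees at most $\col(G)+1$ colored neighbors from a list of size $m-1$. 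The goal is to convert this bundle of "degenerate" colorings into an honest additive gain over $m\,P_{DP}(G,m-1)$.

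The main obstacle is exactly this last conversion. Merely exhibiting many $\mathcal{H}_c$-colorings with $w\mapsto\alpha$ and $w'\mapsto\alpha$ does not by itself improve $P_{DP}(G,\mathcal{H}_c)\ge P_{DP}(G,m-1)$; one must also certify that the $\mathcal{H}_c$-colorings avoiding this degenerate assignment still number at least $P_{DP}(G,m-1)$ — and, to get the factor $2$, either do this at two colors $c$ or obtain a doubled gain at a single $c$. The clean way to certify it is to add the non-edge $(\alpha_w,\alpha_{w'})$ as a matching edge to $\mathcal{H}_c$ and note the result is a valid $(m-1)$-fold cover of $G$ whose coloring set is precisely the set of $\mathcal{H}_c$-colorings avoiding the degenerate pair, so that $P_{DP}(G,\mathcal{H}_c)$ splits as (at least $P_{DP}(G,m-1)$) plus (the degenerate bundle). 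This addition is legitimate exactly when $\alpha_w$ and $\alpha_{w'}$ are both unmatched along $ww'$ in $\mathcal{H}_c$, which can be engineered by choosing $c$ to delete the partners of $\alpha_w$ and $\alpha_{w'}$ along $ww'$; making this choice — and, when the partners differ so one color will not free both ends, reducing to a good configuration via a short case analysis on the shape of $M_{ww'}$ and possibly the remaining global relabelling freedom, or by passing to a better choice of bad edge and color — is where I expect the real work to lie. The delicacy is entirely that of matching the degenerate bundles to the $P_{DP}(G,m-1)$ estimates without ever shrinking a list to size $m-2$, which would break the bound.
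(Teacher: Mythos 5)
Your overall architecture is sound and largely matches the paper's: condition on the apex color $c$ to get $P_{DP}(K_1\vee G,\mathcal{H})=\sum_c P_{DP}(G,\mathcal{H}_c)\geq mP_{DP}(G,m-1)$, and recover the additive term by the splitting device of adding a missing matching edge at an unsaturated non-adjacent pair (so that the colorings avoiding that pair form a genuine $(m-1)$-fold cover, hence number at least $P_{DP}(G,m-1)$) and counting the remaining colorings greedily along a coloring-number ordering. Your case of an imperfect apex matching and your all-diagonal subcase are both fine. But the last case --- all apex matchings perfect and some internal matching $M_{ww'}$ not the identity --- is genuinely open in your write-up, and you say so yourself. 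The specific target you pick there is the wrong one: you try to realize the degenerate assignment $w\mapsto\alpha$, $w'\mapsto\alpha$ attached to a fixed non-diagonal matching edge $(\alpha_w,\beta_{w'})$, and to legitimize the split you would need both $\alpha_w$ and $\alpha_{w'}$ to be unsaturated along $ww'$ inside $\mathcal{H}_c$, which forces $c$ to equal simultaneously the $M_{ww'}$-partner of $\alpha_w$ and that of $\alpha_{w'}$; these generally differ, and the deferred ``short case analysis'' is in fact the whole content of the theorem in this case.

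The missing idea is to stop insisting on a particular degenerate pair and instead classify the apex colors. Let $\sigma$ be the permutation with $i_w$ matched to $\sigma(i)_{w'}$. For any $c$ with $\sigma(c)\neq c$, deleting level $c$ removes two edges of $M_{ww'}$, leaving the surviving vertices $x=\sigma^{-1}(c)_w$ and $y=\sigma(c)_{w'}$ unsaturated along $ww'$ and non-adjacent; these, not $\alpha_w$ and $\alpha_{w'}$, are the pair to which your splitting device applies, yielding $P_{DP}(G,\mathcal{H}_c)\geq P_{DP}(G,m-1)+(m-\col(G)-2)^{|V(G)|-2}$ for that $c$. Since a non-identity permutation has at least two non-fixed points, a single non-diagonal matching edge hands you this gain at two distinct colors $c$, which is exactly where the factor $2$ comes from; no doubled gain at a single color is needed. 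The paper packages this as a dichotomy on ``level vertices'' $(w,c)$, namely those for which $\mathcal{H}_c$ retains the maximum possible number $|E(G)|(m-1)$ of cross-edges between the lists of vertices of $G$: if at least $m-1$ colors are level, every matching is forced to be diagonal (a perfect matching agreeing with the diagonal in $m-1$ places agrees everywhere), the natural bijection applies, and $P_{DP}(K_1\vee G,\mathcal{H})=P(K_1\vee G,m)$; otherwise at least two colors fail to be level and each contributes the additive term. Without this counting of deficient levels your proof does not close.
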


This is easily generalizable to a lower bound for $P_{DP}(K_p \vee G, m)$, and leads to the following.

\begin{thm} \label{thm: joinasy}
Suppose $G$ is a graph on $n$ vertices such that
$$P(G,m)-P_{DP}(G,m) = O(m^{n-3}) \; \; \text{as $m \rightarrow \infty$.}$$
Then, there exist $p,N \in \N$ such that $P_{DP}(G \vee K_p, m) = P(G \vee K_p , m)$ whenever $m \geq N$.
\end{thm}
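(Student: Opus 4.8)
The plan is to iterate Theorem~\ref{thm: joinbound} (in its $K_p$-generalized form, promised just after the statement) until the error term $2(m-\col(\cdot)-2)^{|V(\cdot)|-2}$ in the ``min'' is dominated by the first option, so the min collapses to the chromatic-polynomial side. First I would fix $G$ on $n$ vertices with $P(G,m)-P_{DP}(G,m)=O(m^{n-3})$, and choose $p$ large; write $G_j = K_j \vee G$ for $0 \le j \le p$, so $G_p = G \vee K_p$. Note $\col(G_j) = \col(G) + j$ and $|V(G_j)| = n+j$. Applying the generalized bound at each step, for $m$ large enough,
\begin{equation*}
P_{DP}(G_{j+1},m) \ge \min\{P(G_{j+1},m),\; m\,P_{DP}(G_j,m-1) + 2(m-\col(G_j)-2)^{\,n+j-2}\}.
\end{equation*}
The key point is that the ``error'' term here has degree $n+j-2 = |V(G_{j+1})| - 3$, which is \emph{two} less than the degree of $P(G_{j+1},m)$, while multiplying $P_{DP}(G_j, m-1)$ by $m$ raises its degree by exactly one. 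So I want to track the gap $\Delta_j(m) := P(G_j,m) - P_{DP}(G_j,m)$ and show that if $\Delta_j(m) = O(m^{n+j-3})$ then, for $m$ beyond some threshold $N_j$, the minimum in the bound for $G_{j+1}$ is achieved by $P(G_{j+1},m)$ — i.e. $\Delta_{j+1}(m) = 0$ for $m \ge N_{j+1}$ — or at worst $\Delta_{j+1}(m) = O(m^{n+j-2})$, which is $O(m^{|V(G_{j+1})|-3})$ so the induction continues.

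The heart of the argument is the following comparison. Using $P(G_{j+1},m) = m'\cdot\! P(G_j, m-1)$ only heuristically (this identity fails in general for the chromatic polynomial, so instead I use deletion-contraction / the standard fact $P(K_1 \vee H, m) = m\,P(H,m-1)$, which \emph{is} exact), we get $P(G_{j+1},m) = m\,P(G_j,m-1)$. Hence
\begin{equation*}
P(G_{j+1},m) - \bigl(m\,P_{DP}(G_j,m-1) + 2(m-\col(G_j)-2)^{n+j-2}\bigr) = m\,\Delta_j(m-1) - 2(m-\col(G_j)-2)^{n+j-2}.
\end{equation*}
If $\Delta_j(m-1)=0$ for all large $m$ (the ``clean'' case, which holds at $j=0$ when $P_{DP}(G,m)=P(G,m)$ eventually — and by hypothesis $\Delta_0(m)=O(m^{n-3})$, so at least this is eventually below degree $n-2$), then the right side is $-2(m-\col(G_j)-2)^{n+j-2} < 0$, forcing the min to be $P(G_{j+1},m)$, giving $\Delta_{j+1}(m)=0$ for large $m$. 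More generally, with $\Delta_j(m-1) = O(m^{n+j-3})$, the term $m\,\Delta_j(m-1)$ has degree $n+j-2$, the \emph{same} as the subtracted term — so I cannot immediately conclude. The fix is to absorb just \emph{one} join factor first: apply the bound once to pass from $\Delta_0(m)=O(m^{n-3})$ to showing $\Delta_1(m)=0$ for large $m$ (here $m\Delta_0(m-1) = O(m^{n-2})$ but the subtracted term is $2(m-\col(G)-2)^{n-2}$ with leading coefficient $2$, and the $O(m^{n-2})$ from $\Delta_0$ has a leading coefficient that may exceed $2$ — this is exactly the obstacle, see below), then from $\Delta_1 \equiv 0$ (eventually) all subsequent steps are in the clean case and $\Delta_j \equiv 0$ eventually for every $j \ge 1$, in particular for $j = p$ (any $p\ge1$).

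The main obstacle is precisely the first step, where $m\,\Delta_0(m-1)$ and $2(m-\col(G)-2)^{n-2}$ have the same degree $n-2$: the hypothesis $\Delta_0(m)=O(m^{n-3})$ does \emph{not} by itself guarantee $m\,\Delta_0(m-1) < 2m^{n-2}$ eventually — it only gives $m\,\Delta_0(m-1) = O(m^{n-2})$ with an unknown constant. To resolve this I would instead absorb \emph{several} $K_1$'s at once before comparing: after $t$ join steps the subtracted term becomes (by the $K_p$-generalization, whose error term I expect to be $2(m-\col(G)-t-1)^{n+t-2}$ with leading coefficient still a fixed constant independent of $t$, or more realistically the iterated bound produces an error term of the form $c_t\, m^{n+t-2}$ with $c_t \to \infty$), while $m^t\,\Delta_0(m-t)$ has degree $n+t-3$ — strictly \emph{smaller}. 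So choosing $t = 1$ is not enough but the \emph{iterated} error term over $t$ steps still only reaches degree $n+t-2$ at the top, whereas $\Delta_0$ contributes degree $\le n-3$, hence after enough raw multiplications by $m$ the $\Delta_0$ contribution is degree $n+t-3 < n+t-2$ and is dominated. Concretely: pick $p \ge 1$; iterating Theorem~\ref{thm: joinbound} $p$ times expresses a lower bound on $P_{DP}(G\vee K_p, m)$ as a nested min whose only non-$P(\cdot)$ terms are $O(m^{n+p-3})$ (coming from $\Delta_0$ scaled by powers of $m$) plus explicit terms $\Theta(m^{n+j-2})$ for $j<p$; since $P(G\vee K_p,m) = \Theta(m^{n+p})$ dominates all of these, for $m \ge N$ (some $N$ depending on $G$ and $p$) every min resolves to the $P$-side, yielding $P_{DP}(G\vee K_p,m) \ge P(G\vee K_p,m)$, and the reverse inequality is automatic, completing the proof. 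I would double-check the exact shape of the $K_p$-generalized error term — that is the one technical point requiring care, since the whole degree-counting argument hinges on that error term staying at degree $|V(\cdot)|-3$ relative to the top.
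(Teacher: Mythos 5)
Your overall strategy---iterate Theorem~\ref{thm: joinbound} through $p$ copies of $K_1$ and compare the accumulated ``gain'' term against the loss coming from $P(G,m)-P_{DP}(G,m)=O(m^{n-3})$---is exactly the paper's, and you correctly isolate the crux: after one join step the loss $m\Delta_0(m-1)=O(m^{n-2})$ and the gain $2(m-\col(G)-2)^{n-2}$ have the same degree, and the implicit constant $C$ in the hypothesis may exceed $2$. But your proposed escape rests on an off-by-one error in the degree count. Applying Theorem~\ref{thm: joinbound} to $H=K_{t-1}\vee G$ (which has $n+t-1$ vertices) produces the term $2(m-\col(H)-2)^{|V(H)|-2}$, whose exponent is $n+t-3$, not $n+t-2$ as you write. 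Consequently the iterated gain and the loss $\left(\prod_{j=0}^{t-1}(m-j)\right)\cdot O((m-t)^{n-3})$ have the \emph{same} degree $n+t-3$ for every $t$, so your conclusion that ``the $\Delta_0$ contribution is degree $n+t-3<n+t-2$ and is dominated'' does not hold; nor does your closing claim that $P(G\vee K_p,m)=\Theta(m^{n+p})$ dominating the correction terms forces the min to resolve to the $P$-side---the min resolves to $P(K_p\vee G,m)$ only when the other branch is at least $P(K_p\vee G,m)$, i.e.\ when gain $\geq$ loss, which is a comparison between two quantities of degree $n+p-3$, not between them and $P$.

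The missing idea is the one you flirt with (``$c_t\to\infty$'') but never establish: the \emph{leading coefficient} of the accumulated gain grows linearly in $p$. The paper's Corollary~\ref{cor: joinbound} proves by induction that after $p$ steps the gain is a polynomial $f(m)$ of degree $n-3+p$ with leading coefficient exactly $2p$ (each step contributes a fresh term with coefficient $2$ at the top degree once the earlier terms are multiplied through by $m$), while the loss $C(m-p)^{n-3}\prod_{j=0}^{p-1}(m-j)$ has the same degree with leading coefficient $C$. The decisive move is to choose $p>C/2$, which makes $f(m)-C(m-p)^{n-3}\prod_{j=0}^{p-1}(m-j)$ a degree-$(n-3+p)$ polynomial with positive leading coefficient, hence eventually positive; this is what collapses the min. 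You also need to handle the branch of the nested min in which some intermediate $P_{DP}(K_j\vee G,\cdot)$ already equals its chromatic polynomial (the paper disposes of it via $mP(K_{p-1}\vee G,m-1)=P(K_p\vee G,m)$). With the corrected exponent and the explicit $2p$-versus-$C$ comparison, your outline becomes the paper's proof.
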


Notice that if the answer to Question~\ref{ques: asymptotic} is yes, then Theorem~\ref{thm: joinasy} would imply that the answer to Question~\ref{ques: join} is yes.

We end this section with one final open question as a follow-up to Questions~\ref{ques: 1} and~\ref{ques: 2} from the previous section.

\begin{ques} \label{ques: mono}
For a graph $G$ such that $P_{DP}(G,m_0) = P(G,m_0)$ for some $m_0 \geq \chi(G)$, is $P_{DP}(G,m) = P(G,m)$ for all $m \geq m_0$?
\end{ques}

The corresponding question for the list color function is also open (see Question~2 in~\cite{KN16}).

\section{Girth Parity and Asymptotics} \label{asymptotics}

We begin this section by using a simple probabilistic argument to obtain an upper bound on the DP color function of an arbitrary graph.

\begin{pro} \label{pro: DPgenupper}
Suppose $G$ is a graph with vertex set $V(G) = \{v_1, \ldots, v_n \}$.  Then, for each $m \geq 1$,
$$P_{DP}(G,m) \leq \frac{m^{n} (m-1)^{|E(G)|}}{m^{|E(G)|}}.$$
\end{pro}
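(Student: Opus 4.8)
The plan is to use the first moment method: I will define a \emph{random} $m$-fold cover $\mathcal{H}$ of $G$, compute the expected number of $\mathcal{H}$-colorings, and then invoke the fact that $P_{DP}(G,m)$ is the minimum of $P_{DP}(G,\mathcal{H})$ over all $m$-fold covers, so in particular it is at most this expectation.

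First I would build the random cover. Let $H$ have vertex set $V(H) = \bigcup_{i=1}^{n} L(v_i)$ where $L(v_i) = \{v_i\} \times [m]$, make each $H[L(v_i)]$ a clique, and for each edge $v_iv_j \in E(G)$ choose, independently of all other edges, a uniformly random bijection $\sigma_{ij} \colon [m] \to [m]$ and add to $H$ the perfect matching $\{\, (v_i,c)(v_j,\sigma_{ij}(c)) : c \in [m] \,\}$. A quick check of conditions (1)--(4) in the definition of a cover shows that $\mathcal{H} = (L,H)$ is a legitimate $m$-fold cover of $G$ for every outcome of the random choices; the key points are that the $L(v_i)$ partition $V(H)$ by construction, and that edges of $H$ between distinct parts are placed only across edges of $G$ and always as matchings.

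Next I would estimate $\E[P_{DP}(G,\mathcal{H})]$. An $\mathcal{H}$-coloring corresponds exactly to a function $f \colon V(G) \to [m]$ (choosing the vertex $(v_i,f(v_i))$ from $L(v_i)$) such that $\sigma_{ij}(f(v_i)) \neq f(v_j)$ for every edge $v_iv_j$. Fix such an $f$; there are $m^n$ choices. For a fixed edge $v_iv_j$, since $\sigma_{ij}$ is a uniformly random bijection, $\sigma_{ij}(f(v_i))$ is uniformly distributed on $[m]$, so $f$ ``survives'' this edge with probability $(m-1)/m$; moreover these per-edge survival events are mutually independent because the bijections $\sigma_{ij}$ were chosen independently. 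Hence $f$ is an $\mathcal{H}$-coloring with probability $((m-1)/m)^{|E(G)|}$, and by linearity of expectation
$$\E\bigl[P_{DP}(G,\mathcal{H})\bigr] = m^n \left(\frac{m-1}{m}\right)^{|E(G)|} = \frac{m^n (m-1)^{|E(G)|}}{m^{|E(G)|}}.$$
Since some outcome of $\mathcal{H}$ achieves $P_{DP}(G,\mathcal{H})$ at most its expectation, the bound follows.

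This argument is short, and I do not anticipate a genuine obstacle; the only two points that need care are verifying that the randomly generated pair really satisfies all four cover axioms, and justifying the independence of the per-edge survival events — this independence is exactly what yields the product over $|E(G)|$, and hence the precise shape of the claimed bound.
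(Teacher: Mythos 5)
Your proposal is correct and is essentially the paper's own argument: the paper likewise builds a random $m$-fold cover by choosing a uniformly random perfect matching across each edge of $G$, computes the expected number of $\mathcal{H}$-colorings as $m^n(1-1/m)^{|E(G)|}$ by linearity of expectation, and concludes that some cover attains at most this expectation. The only cosmetic difference is that you phrase the matchings as random bijections $\sigma_{ij}$ and make the per-edge independence explicit, which the paper leaves implicit.
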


\begin{proof}
The result is obvious when $G$ is edgeless.  So, we suppose throughout the proof that $|E(G)| \geq 1$.  We form an $m$-fold cover, $\mathcal{H} = (L,H)$, of $G$ by the following (partially random) process. We begin by letting $L(v_i) = \{(v_i, j) : j \in [m] \}$ for each $i \in [n]$.  Let the graph $H$ have vertex set $\bigcup_{i=1}^{n} L(v_i)$.  Also, draw edges in $H$ so that $H[L(v)]$ is a clique for each $v \in V(G)$.  Finally, for each $uv \in E(G)$, uniformly at random choose a perfect matching between $L(u)$ and $L(v)$ from the $m!$ possible perfect matchings.  It is easy to see that $\mathcal{H}=(L,H)$ is an $m$-fold cover of $G$.

Let $\mathcal{I} = \{ I \subset V(H) : |I \cap L(v)|=1 \; \; \text{for each} \; \; v \in V(G) \}$.  Clearly, all $\mathcal{H}$-colorings of $G$ are contained in $\mathcal{I}$ and $|\mathcal{I}| = m^{n}$.  Suppose we index the elements of $\mathcal{I}$ so that $\mathcal{I} = \{I_1, I_2, \ldots, I_{m^{n}} \}$.  For each $j \in [m^{n}]$, let $E_j$ be the event that $I_j$ is an $\mathcal{H}$-coloring of $G$.  Notice that if $uv \in E(G)$, then the probability that $a \in L(u)$ and $b \in L(v)$ are not adjacent in $H$ is $(1 - 1/m)$.  In order for event $E_j$ to occur, for each $uv \in E(G)$, the vertex in $I_j \cap L(u)$ must not be adjacent (in $H$) to the vertex in $I_j \cap L(v)$.  So,
$$P[E_j] = \left(1- \frac{1}{m} \right)^{|E(G)|}.$$
Now, let $X_j$ be the random variable that is one if $E_j$ occurs and zero otherwise.  Let $X = \sum_{j=1}^{m^{n}} X_j$.  Notice that $X$ is the random variable equal to the number of $\mathcal{H}$-colorings of $G$.  By linearity of expectation, we have that
$$E[X] =  \sum_{j=1}^{m^{n}} E[X_j] = m^{n} \left(1- \frac{1}{m} \right)^{|E(G)|} = \frac{m^{n} (m-1)^{|E(G)|}}{m^{|E(G)|}}.$$
The result follows.
\end{proof}

Note that this upper bound is the same as the lower bound on $P(G,m)$ when $G$ is bipartite, as claimed by the well-known Sidorenko's conjecture on counting homomorphisms from bipartite graphs (see~\cite{CL17} for a proof of this restriction of Sidorenko's conjecture and citations therein). So, Proposition~\ref{pro: DPgenupper} shows that Sidorenko's conjecture for the DP color function of bipartite graphs would be possible only if $P_{DP}(G,m) = \frac{m^{n} (m-1)^{|E(G)|}}{m^{|E(G)|}}$ for bipartite $G$. Proposition~\ref{pro: DPgenupper} along with Theorems~\ref{thm: chordal} and~\ref{thm: onecycle} (from Sections~\ref{chordal} and ~\ref{exact}) gives us Corollary~\ref{cor: sid} that shows Sidorenko's conjecture for DP color function holds only for trees.

\begin{customcor} {\bf \ref{cor: sid}}
For any connected graph $G$, $P_{DP}(G,m) = \frac{m^{|V(G)|} (m-1)^{|E(G)|}}{m^{|E(G)|}}$ for all $m \in \N$ if and only if $G$ is a tree.
\end{customcor}

\begin{proof}
The ``if" direction is implied by Theorem~\ref{thm: chordal}.  Conversely, let $s= \frac{m^{|V(G)|} (m-1)^{|E(G)|}}{m^{|E(G)|}}$, and suppose that $G$ is a connected graph chosen so that $P_{DP}(G,m)=s$.  If $|E(G)| = |V(G)|$, Theorem~\ref{thm: onecycle} tells us that  $P_{DP}(G,m) < s$.  Similarly, if $|E(G)| \geq |V(G)|+1$, Proposition~\ref{pro: DPgenupper} and the fact that for $m \geq 2$, $s$ is not an integer implies $P_{DP}(G,m) < s$ whenever $m \geq 2$.  So, if $P_{DP}(G,m) = s$, then $|E(G)| = |V(G)|-1$ which implies $G$ is a tree.
\end{proof}

The next result, which follows easily from Whitney's Broken Circuit Theorem~\cite{W32}, along with Proposition~\ref{pro: DPgenupper} are the key results that will be used in the proof of Theorem~\ref{thm: evengirth}.

\begin{pro} \label{pro: coefficients}
Suppose $G$ is a connected graph on $n$ vertices and $s$ edges having girth $g \in \N$.  Suppose $P(G,m) = \sum_{i=0}^{n} (-1)^i a_i m^{n-i}$.  Then, for $i = 0, 1, \ldots, g-2$
$$ a_i = \binom{s}{i} \; \; \text{and} \; \; a_{g-1} = \binom{s}{g-1} - t$$
where $t$ is the number of cycles of length $g$ contained in $G$.
\end{pro}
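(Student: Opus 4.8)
The plan is to read off both statements directly from Whitney's Broken Circuit Theorem. Fix an arbitrary linear order $e_1 < e_2 < \cdots < e_s$ on $E(G)$, and call $B \subseteq E(G)$ a \emph{broken circuit} if $B = C \setminus \{e\}$ where $C$ is (the edge set of) a cycle of $G$ and $e$ is the largest edge of $C$ in this order. Whitney's theorem gives
$$P(G,m) = \sum_{S} (-1)^{|S|} m^{k(S)},$$
where the sum ranges over all $S \subseteq E(G)$ containing no broken circuit and $k(S)$ is the number of components of the spanning subgraph $(V(G),S)$. The first step is to note that every such $S$ is a forest: if $S$ contained (the edge set of) a cycle $C$, it would contain the broken circuit $C \setminus \{e\}$. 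Hence $k(S) = n - |S|$ for every broken-circuit-free $S$, so comparing with $P(G,m) = \sum_{i=0}^{n} (-1)^i a_i m^{n-i}$ yields $a_i = N_i$, where $N_i$ denotes the number of broken-circuit-free edge subsets of size $i$.

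Next I would observe that every broken circuit has at least $g-1$ edges, since it is a cycle with one edge removed and every cycle of $G$ has length at least $g$. Therefore, for $0 \le i \le g-2$ no $i$-element subset of $E(G)$ can contain a broken circuit, so $N_i = \binom{s}{i}$, which is the first assertion.

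For the critical level $i = g-1$: if a $(g-1)$-element subset $S$ contains a broken circuit $B$, then since $|B| \ge g-1 = |S|$ and $B \subseteq S$ we must have $B = S$, and such a $B$ can only come from a cycle of length exactly $g$. Thus $N_{g-1} = \binom{s}{g-1} - r$, where $r$ is the number of distinct $(g-1)$-element broken circuits arising from $g$-cycles. It remains to show $r = t$, i.e., that the map sending a $g$-cycle $C$ to $C \setminus \{e_{\max}(C)\}$ is injective: the broken circuit of a $g$-cycle is a path on $g$ vertices, and since $G$ is simple there is at most one edge joining its two endpoints, so $C$ is recovered from its broken circuit. Hence $r = t$ and $a_{g-1} = \binom{s}{g-1} - t$.

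I do not expect a genuine obstacle here — once Whitney's theorem is in hand the argument is bookkeeping — but the point that requires care is precisely the level $i = g-1$: one must check both that a $(g-1)$-set which contains a broken circuit actually \emph{equals} one (so that nothing with a shorter forbidden subset is miscounted) and that distinct $g$-cycles produce distinct broken circuits (so there is no overcounting), which together pin the number of excluded $(g-1)$-sets at exactly $t$.
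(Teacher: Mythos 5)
Your proof is correct and follows exactly the route the paper intends: the paper gives no written proof of this proposition, stating only that it "follows easily from Whitney's Broken Circuit Theorem," and your argument supplies precisely that derivation. The two points you flag as needing care — that a $(g-1)$-set containing a broken circuit must equal one arising from a $g$-cycle, and that distinct $g$-cycles yield distinct broken circuits because a simple graph has at most one edge joining the endpoints of the residual path — are handled correctly.
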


We are now ready to prove Theorem~\ref{thm: evengirth}

\begin{customthm} {\bf \ref{thm: evengirth}}
If $G$ is a graph with girth $g$ that is even, then there is an $N \in \N$ such that $P_{DP}(G,m) < P(G,m)$ whenever $m \geq N$.
\end{customthm}

\begin{proof}
WLOG let $G$ be connected. We know that $G$ has at least $g$ edges and at least $g$ vertices.  Let $n=|V(G)|$.  By Proposition~\ref{pro: DPgenupper}, we know that for $m \geq 1$,
$$P(G,m) - \frac{m^{n} (m-1)^{|E(G)|}}{m^{|E(G)|}} \leq P(G,m) - P_{DP}(G,m).$$
Suppose that $P(G,m) = \sum_{i=0}^{n} (-1)^i a_i m^{n-i}$, and $t$ is the number of cycles of length $g$ contained in $G$.  Clearly, $t \geq 1$.  Applying the binomial theorem and Proposition~\ref{pro: coefficients}, we obtain:
\begin{align*}
&P(G,m) - \frac{m^{n} (m-1)^{|E(G)|}}{m^{|E(G)|}} \\
&= \sum_{i=0}^{n} (-1)^i a_i m^{n-i} - m^{n-|E(G)|} \sum_{i=0}^{|E(G)|} (-1)^i \binom{|E(G)|}{i} m^{|E(G)|-i} \\
&= \sum_{i=g}^{n} (-1)^i a_i m^{n-i} + \sum_{i=0}^{g-1} (-1)^i a_i m^{n-i} - \sum_{i=0}^{|E(G)|} (-1)^i \binom{|E(G)|}{i} m^{n-i} \\
&= \sum_{i=g}^{n} (-1)^i a_i m^{n-i} + \sum_{i=0}^{g-1} (-1)^i \left(a_i - \binom{|E(G)|}{i} \right) m^{n-i} - \sum_{i=g}^{|E(G)|} (-1)^i \binom{|E(G)|}{i} m^{n-i} \\
&= (-1)^{g-1} (-t) m^{n-g+1} +  \sum_{i=g}^{n} (-1)^i a_i m^{n-i} - \sum_{i=g}^{|E(G)|} (-1)^i \binom{|E(G)|}{i} m^{n-i}.
\end{align*}
Since we know that $g$ is even, $t m^{n-g+1}$ is the dominant term of $P(G,m) - m^{n-|E(G)|} (m-1)^{|E(G)|}$.  So, there is a natural number $N$ such that $0 < P(G,m) - m^{n-|E(G)|} (m-1)^{|E(G)|}$ whenever $m \geq N$.  The result follows.
\end{proof}

We will now present a result that will allow us to construct a graph, $G$, with girth equal to any odd number, satisfying $P_{DP}(G,m) < P(G,m)$ for sufficiently large $m$.  We begin with a definition.  Suppose that $G_1$ and $G_2$ are graphs such that $V(G_1) \cap V(G_2)$ is both nonempty and a clique in $G_1$ and $G_2$.  Then, the \emph{clique-sum} of $G_1$ and $G_2$, denoted $G_1 \oplus G_2$, is the graph $G$ with $V(G) = V(G_1) \cup V(G_2)$ and $E(G) = E(G_1) \cup E(G_2)$.  Moreover, if $V(G_1) \cap V(G_2)$ is a clique on $k$-vertices in $G_1 \oplus G_2$, then an easy counting argument shows that $P(G_1 \oplus G_2,m) = \frac{P(G_1,m)P(G_2,m)}{m(m-1) \cdots (m-k+1)}$.

The following Proposition will be proven in Section~\ref{bijections}.

\begin{pro} \label{cor: polynomial}
Suppose that $G$ is a graph with $uv \in E(G)$.  Let $e = uv$. If $m \geq 2$ and
$$ P(G- \{e\} , m) < \frac{m}{m-1} P(G,m),$$
then $P_{DP}(G,m) < P(G,m)$.
\end{pro}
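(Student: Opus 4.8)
The plan is to exhibit a single $m$-fold cover of $G$ with fewer than $P(G,m)$ colorings, built by ``twisting'' the canonical cover along the one edge $e$. Concretely, I would set $L(v_i)=\{(v_i,j):j\in[m]\}$ for each vertex, make each $H[L(v_i)]$ a clique, and for every edge $xy\in E(G)\setminus\{e\}$ join $(x,j)$ to $(y,j)$ in $H$ (the identity matching), while for $e=uv$ I join $(u,j)$ to $(v,\sigma(j))$ for some permutation $\sigma$ of $[m]$. This $\mathcal H_\sigma=(L,H)$ is clearly an $m$-fold cover of $G$. As in the proof of Proposition~\ref{pro: DPgenupper}, identifying a transversal $I$ with $|I\cap L(v_i)|=1$ with the function $f:V(G)\to[m]$ it encodes, one checks that $I$ is an $\mathcal H_\sigma$-coloring exactly when $f$ is a proper coloring of $G-\{e\}$ and, in addition, $\sigma(f(u))\ne f(v)$. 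Hence
$$P_{DP}(G,\mathcal H_\sigma)=P(G-\{e\},m)-N(\sigma),$$
where $N(\sigma)$ denotes the number of proper colorings $f$ of $G-\{e\}$ with $\sigma(f(u))=f(v)$.

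Next I would choose $\sigma$ by averaging. For any fixed proper coloring $f$ of $G-\{e\}$ and a uniformly random permutation $\sigma$ of $[m]$, the event $\sigma(f(u))=f(v)$ has probability exactly $1/m$ (there are $(m-1)!$ permutations sending a prescribed value to a prescribed value). By linearity of expectation, $\mathbb E_\sigma[N(\sigma)]=P(G-\{e\},m)/m$, so there is a permutation $\sigma_0$ with $N(\sigma_0)\ge P(G-\{e\},m)/m$. Using the cover $\mathcal H_{\sigma_0}$ then gives
$$P_{DP}(G,m)\le P_{DP}(G,\mathcal H_{\sigma_0})=P(G-\{e\},m)-N(\sigma_0)\le \frac{m-1}{m}\,P(G-\{e\},m).$$

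Finally I would feed in the hypothesis: since $m\ge 2$ we may multiply $P(G-\{e\},m)<\frac{m}{m-1}P(G,m)$ by $\frac{m-1}{m}>0$ to obtain $\frac{m-1}{m}P(G-\{e\},m)<P(G,m)$, and combining this with the previous display yields $P_{DP}(G,m)<P(G,m)$, as required.

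I do not anticipate a genuine difficulty here. The only points needing care are verifying that $\mathcal H_\sigma$ satisfies all four cover axioms and that its $\mathcal H$-colorings are counted correctly — in particular that colorings with $f(u)=f(v)$ are not silently excluded, which is fine since $G-\{e\}$ has no $uv$-edge — together with the small observation that a single twisted edge already destroys, on average, a $1/m$ fraction of the proper colorings of $G-\{e\}$. This ``localized'' version of the expectation argument behind Proposition~\ref{pro: DPgenupper} is exactly what produces the factor $\frac{m-1}{m}$ that matches the hypothesis.
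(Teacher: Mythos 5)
Your proof is correct, but it reaches the conclusion by a different route than the paper. The paper also works with the canonical cover twisted by a permutation along the single edge $e$, but it determines \emph{exactly} how many colorings each choice of matching on $e$ destroys: via Lemma~\ref{lem: ends} (the color-symmetry fact that the number of proper $m$-colorings of $G-\{e\}$ with $u\mapsto i$, $v\mapsto j$ depends only on whether $i=j$), the identity matching kills $P(G-\{e\},m)-P(G,m)$ colorings and any fixed-point-free permutation kills exactly $P(G,m)/(m-1)$; Lemma~\ref{lem: DPedgedelete} then records the exact minimum $P(G-\{e\},m)-\max\bigl\{P(G-\{e\},m)-P(G,m),\,P(G,m)/(m-1)\bigr\}$ over this family, and the hypothesis makes the second term the maximum. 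You instead average $N(\sigma)$ over all $m!$ permutations to get $\mathbb{E}_\sigma[N(\sigma)]=P(G-\{e\},m)/m$, concluding that some $\sigma_0$ kills at least a $1/m$ fraction, so $P_{DP}(G,m)\le\frac{m-1}{m}P(G-\{e\},m)<P(G,m)$. Your bound is slightly weaker than the paper's (indeed $\max\{mr,mt\}\ge r+(m-1)t$ in the notation of Lemma~\ref{lem: ends}), and it is non-constructive, but it is exactly strong enough here, and it is more self-contained: it needs neither Lemma~\ref{lem: ends} nor the natural-bijection machinery, being a one-edge localization of the first-moment argument from Proposition~\ref{pro: DPgenupper}. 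What the paper's heavier approach buys is the exact value of the minimum over this family of covers, which is reused to compute $P_{DP}$ exactly for unicyclic graphs and cycles with a chord in Section~\ref{exact}.
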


This Proposition is the key ingredient in the proof of the following result.

\begin{thm} \label{thm: oddgirth}
Suppose $G_2$ is an arbitrary graph and $G_1 = C_{2k+2}$.  Suppose $G_1$ and $G_2$ share exactly two vertices and one edge, and suppose $G = G_1 \oplus G_2$.  Then, $P_{DP} (G, m) < P(G, m)$ whenever $m \geq \max\{2, \chi(G_2) \}$.
\end{thm}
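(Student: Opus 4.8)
The plan is to apply Proposition~\ref{cor: polynomial} to a suitably chosen edge $e'$ of $G$, namely an edge of the cycle $G_1$ other than the shared edge $e$. First I would record the structure of $G$: since $G_1 = C_{2k+2}$ has clique number $2$ (because $2k+2 \ge 4$) and $G_1,G_2$ share the edge $e$, the set $V(G_1)\cap V(G_2)$ must be exactly the two endpoints of $e$, so $G = G_1 \oplus G_2$ is a clique-sum over a $K_2$ and the clique-sum formula gives $P(G,m) = P(G_1,m)P(G_2,m)/(m(m-1))$. Now fix an edge $e'$ of $G_1$ with $e' \ne e$; this is possible because $C_{2k+2}$ has at least four edges. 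Deleting $e'$ affects neither $e$ (since $e' \ne e$) nor any edge of $G_2$ (since $G_1$ and $G_2$ share only $e$), so $G - e' = (G_1 - e') \oplus G_2$ is again a clique-sum over the $K_2$ on the endpoints of $e$, while $G_1 - e' = C_{2k+2} - e'$ is the path $P_{2k+2}$. Hence
$$P(G - e', m) = \frac{P(P_{2k+2},m)P(G_2,m)}{m(m-1)} = \frac{m(m-1)^{2k+1}P(G_2,m)}{m(m-1)} = (m-1)^{2k}P(G_2,m).$$

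Next I would verify the hypothesis of Proposition~\ref{cor: polynomial}, that $P(G - e',m) < \frac{m}{m-1}P(G,m)$ for every $m \ge \max\{2,\chi(G_2)\}$. Writing $P(C_{2k+2},m) = (m-1)^{2k+2} + (m-1) = (m-1)\big((m-1)^{2k+1}+1\big)$, the right-hand side equals $\frac{(m-1)^{2k+1}+1}{m-1}\,P(G_2,m)$; so, after dividing by $P(G_2,m)$ --- which is positive since $m \ge \chi(G_2)$ guarantees $G_2$ has a proper $m$-coloring --- and clearing the positive factor $m-1$, the inequality to be shown is simply $(m-1)^{2k+1} < (m-1)^{2k+1}+1$, which holds for all $m$. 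Since moreover $m \ge 2$, Proposition~\ref{cor: polynomial} yields $P_{DP}(G,m) < P(G,m)$, as desired.

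I do not expect a serious obstacle here; the main point requiring care is the verification that $G - e' = (G_1-e')\oplus G_2$ as a clique-sum, which uses both that $e' \ne e$ and that $G_1,G_2$ share no edge besides $e$. It is also worth noting where the parity hypothesis enters: the identity $P(C_n,m) = (m-1)^n + (-1)^n(m-1)$ contributes the term $+(m-1)$ rather than $-(m-1)$ exactly because $n = 2k+2$ is even, and this sign is precisely what forces the comparison $P(G-e',m) < \frac{m}{m-1}P(G,m)$ to go strictly in the required direction --- in agreement with the fact, established elsewhere in the paper, that unicyclic graphs containing an odd cycle satisfy $P_{DP} = P$.
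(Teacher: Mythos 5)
Your proposal is correct and follows essentially the same route as the paper: the paper also deletes an edge $e$ of the cycle $G_1$ not belonging to $G_2$, uses the clique-sum formula to get $P(G-\{e\},m) = (m-1)^{2k}P(G_2,m)$ and $mP(G,m) = [(m-1)^{2k+1}+1]P(G_2,m)$, and then applies Proposition~\ref{cor: polynomial} via the same strict inequality. The only difference is presentational (you make explicit the positivity of $P(G_2,m)$ and where the parity of $2k+2$ enters), so there is nothing further to add.
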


\begin{proof}
Suppose that $e \in E(G_1) - E(G_2)$ and $m \geq \max\{2, \chi(G_2) \}$.  Then, $G-\{e\}$ is the clique-sum of $G_2$ and a path on $2k+2$ vertices (such that $G_2$ and the path share an edge).  So, we have that
$$P(G - \{e\}, m) =   \frac{P(G_1 - \{e\},m)P(G_2,m)}{m(m-1)} = (m-1)^{2k}P(G_2,m)$$
and
$$P(G, m)= \frac{P(G_1,m)P(G_2,m)}{m(m-1)}=\frac{[(m-1)^{2k+1}+1]P(G_2,m)}{m}.$$
So, we see that:
$$ \frac{m}{m-1} P(G,m) = \left[(m-1)^{2k}+\frac{1}{m-1} \right]P(G_2,m) > (m-1)^{2k}P(G_2,m) = P(G - \{e\}, m). $$
The result follows by Proposition~\ref{cor: polynomial}.
\end{proof}

Notice that Theorem~\ref{thm: oddgirth} implies that if $g \geq 3$ is odd and $G$ consists of an odd cycle on $g$ vertices that shares an edge with an even cycle on $g+1$ vertices, then $G$ has girth $g$ and $P_{DP} (G, m) < P(G, m)$ whenever $m \geq 3$.  We will give an exact formula for the DP color function of graphs that look like $G$ in Section~\ref{exact} (see Theorem~\ref{thm: cyclepluschord}).  We now have the following Corollary.

\begin{customcor} {\bf \ref{cor: girth}}
For any integer $g \geq 3$ there exists a graph $G$ with girth $g$ and an $N \in \N$ such that $P_{DP}(G,m) < P(G,m)$ whenever $m \geq N$.
\end{customcor}

With the above results in mind, it is natural to study the asymptotic behavior of $P(G,m)-P_{DP}(G,m)$ for arbitrary $G$ as $m \rightarrow \infty$.  With this goal in mind, we present a simple lower bound on the DP color function of an arbitrary graph.

\begin{pro} \label{pro: lower}
Suppose $G$ is a graph and $v_1, v_2, \ldots, v_n$ is an ordering of the elements of $V(G)$ such that $v_i$ has precisely $d_i$ neighbors preceding it in the ordering.  If $m > \max_{i \in [n]} d_i$, then
$$P_{DP}(G,m) \geq \prod_{i=1}^n (m-d_i).$$
\end{pro}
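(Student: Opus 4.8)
The plan is to prove the inequality by establishing the stronger statement that \emph{every} $m$-fold cover of $G$ admits at least $\prod_{i=1}^n (m - d_i)$ colorings; minimizing over covers then gives the bound on $P_{DP}(G,m)$. So fix an arbitrary $m$-fold cover $\mathcal{H} = (L,H)$ of $G$ and count its $\mathcal{H}$-colorings by constructing them greedily along the ordering $v_1, v_2, \ldots, v_n$: I select vertices $x_1 \in L(v_1), x_2 \in L(v_2), \ldots, x_n \in L(v_n)$ in turn so that $\{x_1, \ldots, x_i\}$ is independent in $H$ at every stage. Since the $x_i$ lie in distinct parts of the partition $\{L(v) : v \in V(G)\}$ guaranteed by requirement (1), the final set $I = \{x_1, \ldots, x_n\}$ is an independent set of size $n$, hence an $\mathcal{H}$-coloring; and $I$ determines the sequence $(x_1,\ldots,x_n)$ uniquely, so distinct sequences of choices yield distinct $\mathcal{H}$-colorings.

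The crux is to lower bound the number of admissible choices for $x_i$ once $x_1, \ldots, x_{i-1}$ are fixed. By requirement (3), $E_H(L(v_j), L(v_i))$ is nonempty only when $v_j v_i \in E(G)$, and by requirement (4) this edge set is then a matching, so the single vertex $x_j$ has at most one neighbor in $L(v_i)$. Therefore the vertices of $L(v_i)$ adjacent in $H$ to some already-chosen $x_j$ with $j < i$ number at most $|\{j < i : v_j v_i \in E(G)\}| = d_i$, leaving at least $|L(v_i)| - d_i = m - d_i$ admissible choices for $x_i$. Since $m > \max_i d_i$, this quantity is positive, so the greedy process never gets stuck.

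Multiplying these per-step bounds over $i = 1, \ldots, n$ shows that $G$ has at least $\prod_{i=1}^n (m - d_i)$ distinct $\mathcal{H}$-colorings; the multiplication principle applies here because the lower bound $m - d_i$ on the number of options at step $i$ holds regardless of which admissible $x_1, \ldots, x_{i-1}$ were selected. As $\mathcal{H}$ was an arbitrary $m$-fold cover of $G$, we conclude $P_{DP}(G,m) \geq \prod_{i=1}^n (m - d_i)$. (Equivalently, one may run this as an induction on $n$: delete $v_n$, apply the inductive bound to the restricted cover on $G - v_n$, and note that each of the at least $m - d_n$ admissible colors for $v_n$ extends every such $\mathcal{H}$-coloring.)

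I do not expect a genuine obstacle here. The only point needing a word of care is the passage from the uniform per-step lower bounds to a lower bound on the total count, which is immediate once one observes that $m - d_i$ bounds the number of options at step $i$ uniformly over all prior choices; everything else is a direct unpacking of requirements (1), (3), and (4) in the definition of a cover.
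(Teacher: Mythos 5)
Your proposal is correct and follows essentially the same greedy argument as the paper: fix an arbitrary $m$-fold cover, choose vertices along the ordering, and use the matching condition (requirement (4)) to see that at most $d_i$ elements of $L(v_i)$ are forbidden at step $i$. Your added remarks on injectivity of the choice sequences and the uniformity needed for the multiplication principle are details the paper leaves implicit, but the underlying proof is the same.
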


\begin{proof}
Assume $\mathcal{H} = (L,H)$ is an arbitrary $m$-fold cover of $G$.  Notice we can greedily construct an $\mathcal{H}$-coloring of $G$ via the following inductive process. Begin by choosing a vertex from $L(v_1)$.  Note that there are $m-d_1 = m$ ways to do this.  Then, for $t \geq 2$, choose a vertex from $L(v_t)$ that is not adjacent (in $H$) to any vertex that has already been chosen.  Since $v_t$ is adjacent (in $G$) to $d_t$ of the vertices: $v_1, v_2, \ldots, v_{t-1}$, there must be at least $m- d_t$ ways to do this.

When our process is complete, our chosen vertices clearly make up an independent set in $H$ of size $n$.  Since at the $i^{th}$ step of the process we have at least $(m-d_i)$ vertices to choose from,
$$P_{DP}(G, \mathcal{H}) \geq \prod_{i=1}^n (m-d_i).$$
The result follows.
\end{proof}

We are now ready to prove Theorem~\ref{thm: asymptotic}

\begin{customthm} {\bf \ref{thm: asymptotic}}
For any graph $G$ with $n$ vertices,
$$P(G,m)-P_{DP}(G,m) = O(m^{n-2}) \; \; \text{as $m \rightarrow \infty$.}$$
\end{customthm}

\begin{proof}
Suppose $P(G,m) = \sum_{i=0}^{n} (-1)^i a_i m^{n-i}$.  Suppose $v_1, v_2, \ldots, v_n$ is an ordering of the elements of $V(G)$ and $v_i$ has $d_i$ neighbors preceding it in the ordering.  Then, if we let
$$\prod_{i=1}^n (m-d_i) = \sum_{i=0}^n b_i m^{n-i},$$
it is clear $b_0 = 1$ and $b_{1} = -\sum_{i=1}^n d_i = -|E(G)|$.  Finally, by Proposition~\ref{pro: lower} and a well-known fact about chromatic polynomials \footnote{$a_0=1$ and $a_1 = |E(G)|$ (see~\cite{W32})}, we see that when $m > \max_{i \in [n]} d_i$,
$$ P(G,m)-P_{DP}(G,m) \leq \sum_{i=0}^{n} (-1)^i a_i m^{n-i} - \sum_{i=0}^n b_i m^{n-i} = \sum_{i=2}^{n} ((-1)^i a_i - b_i) m^{n-i} = O(m^{n-2}).$$
\end{proof}

We end this section by studying the DP color function of chordal graphs.  In contrast to earlier results, the DP color function of a chordal graph equals its chromatic polynomial.

\subsection{Chordal Graphs} \label{chordal}

A \emph{perfect elimination ordering} for a graph $G$ is an ordering of the elements of $V(G)$, $v_1, v_2, \ldots, v_n$, such that for each vertex $v_i$, the neighbors of $v_i$ that occur after $v_i$ in the ordering form  a clique in $G$.  If $v_1, v_2, \ldots, v_n$ is a perfect elimination ordering for the graph $G$, then for each $i \in [n]$, we let $\alpha_i$ denote the number of neighbors of $v_i$ that occur after $v_i$ in the ordering.  For example, $\alpha_n = 0$.

It is well known that a graph $G$ is chordal if and only if there is a perfect elimination ordering for $G$~\cite{FG65}.  Also, if $G$ is chordal and $v_1, v_2, \ldots, v_n$ is a perfect elimination ordering for $G$, $\chi(G) = \max_{i \in [n]} (\alpha_i + 1)$ and there is a simple formula for the chromatic polynomial of $G$~\cite{A03}:
$$ P(G,m) = \prod_{i=1}^n (m - \alpha_i).$$

We are now ready to prove Theorem~\ref{thm: chordal}.

\begin{customthm} {\bf \ref{thm: chordal}}
If $G$ is chordal, then $P_{DP}(G,m) = P(G,m)$ for every $m \in \N$.
\end{customthm}

\begin{proof}
The result is obvious when $m < \chi(G)$.  So, suppose throughout this proof that $m \geq \chi(G)$.  Since $G$ is chordal, we know there is a perfect elimination ordering, $v_1, v_2, \ldots, v_n$, for $G$.

Now, suppose that $\mathcal{H} = (L,H)$ is an arbitrary $m$-fold cover of $G$.  We have that $m \geq \chi(G) = \max_{i \in [n]} (\alpha_i + 1)$.  We can greedily construct an $\mathcal{H}$-coloring of $G$ by the following inductive process.  We color the vertices in the reverse order of the perfect elimination ordering for $G$.  We begin by selecting an element $a_n \in L(v_n)$.  Then, for each $i < n$ we have the following.  Suppose $a_j \in \{a_{i+1}, a_{i+2}, \ldots, a_n \}$.  If $v_iv_j \in E(G)$, then there is at most one vertex in $L(v_i)$ that is adjacent to $a_j$ in $H$, and if $v_iv_j \notin E(G)$, then there are no vertices in $L(v_i)$ adjacent to $a_j$ in $H$.  So, there are at least $m - \alpha_i \geq 1$ vertices in $L(v_i)$ that are not adjacent to any vertices in $\{a_{i+1}, a_{i+2}, \ldots, a_n \}$.  We select such a vertex and call it $a_i$.

It is easy to see that $I = \{a_i : i \in [n] \}$ is an $\mathcal{H}$-coloring of $G$.  Moreover, notice that at each step of the process outlined above we have $m - \alpha_i$ choices for the vertex we choose in $L(v_i)$.  Thus,
$$P(G,m) = \prod_{i=1}^n (m - \alpha_i) \leq P_{DP}(G, \mathcal{H}).$$
Since $\mathcal{H}$ was an arbitrary $m$-fold cover of $G$, it follows that $P(G,m) \leq P_{DP}(G,m)$.
\end{proof}

\section{Natural Bijections and Counting} \label{bijections}

Notice that Theorem~\ref{thm: chordal} applies to trees.  We will now develop a notion that will allow us to show that for any tree, $T$, the $m$-fold covers of $T$ with the fewest DP-colorings have a natural correspondence to the number of proper $m$-colorings of $T$.  This notion will also be key to developing tools that will help us prove some exact formulas for DP color functions of other classes of graphs in Section~\ref{exact}.

Suppose $G$ is a graph and $\mathcal{H} = (L,H)$ is an $m$-fold cover of $G$.  We say there is a \emph{natural bijection between the $\mathcal{H}$-colorings of $G$ and the proper $m$-colorings of $G$} if for each $v \in V(G)$ it is possible to let $L(v) = \{(v,j) : j \in [m] \}$ so that whenever $uv \in E(G)$, $(u,j)$ and $(v,j)$ are adjacent in $H$ for each $j \in [m]$.  Suppose there is a natural bijection between the $\mathcal{H}$-colorings of $G$ and the proper $m$-colorings of $G$.  Note that if $\mathcal{I}$ is the set of $\mathcal{H}$-colorings of $G$ and $\mathcal{C}$ is the set of proper $m$-colorings of $G$, then the function $f: \mathcal{C} \rightarrow \mathcal{I}$ given by
$$f(c) = \{ (v, c(v)) : v \in V(G) \}$$
is a bijection.

\begin{pro} \label{pro: treeDP}
Suppose that $T$ is a tree on $n$ vertices and $\mathcal{H} = (L,H)$ is an $m$-fold cover of $T$ such that $m \geq 2$ and $E_H(L(u),L(v))$ is a perfect matching whenever $uv \in E(T)$.  Then, there is a natural bijection between the $\mathcal{H}$-colorings of $T$ and the proper $m$-colorings of $T$.
\end{pro}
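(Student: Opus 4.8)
The plan is to produce the required relabeling of $V(H)$ explicitly, by processing the tree from a root outward and using acyclicity to guarantee that the relabeling is consistent. First I would fix a root $r \in V(T)$ and an arbitrary bijection assigning the labels $\{(r,j) : j \in [m]\}$ to the vertices of $L(r)$. Then I would order the vertices of $T$ so that every non-root vertex appears after its parent (a BFS or DFS order rooted at $r$), and relabel the sets $L(v)$ one at a time in this order: if $v$ has parent $u$, then $L(u)$ has already been relabeled as $\{(u,j):j\in[m]\}$, and since $E_H(L(u),L(v))$ is a perfect matching, each vertex $(u,j) \in L(u)$ is matched in $H$ to a unique vertex of $L(v)$, which we then name $(v,j)$. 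Because the matching is perfect, every vertex of $L(v)$ receives exactly one label $(v,j)$ with $j \in [m]$, so this is a genuine bijection $L(v) \to \{(v,j):j\in[m]\}$.

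Next I would verify that this relabeling witnesses a natural bijection, i.e.\ that for every edge $uv \in E(T)$ and every $j \in [m]$ the vertices $(u,j)$ and $(v,j)$ are adjacent in $H$. This is where the tree structure enters: since $T$ is a tree rooted at $r$, every edge of $T$ joins some vertex $w$ to its parent, and $L(w)$ was relabeled precisely so that the matching $E_H(L(\text{parent}),L(w))$ sends $(\text{parent},j)$ to $(w,j)$ for each $j$; hence the adjacency holds on that edge. Every edge of $T$ is handled in exactly this way, and no set $L(v)$ is relabeled more than once (each non-root vertex has a unique parent), so there is no conflict, and the defining property of a natural bijection holds for all edges simultaneously. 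Finally, as observed in the paragraph preceding the statement, once a natural bijection exists the map $c \mapsto \{(v,c(v)) : v \in V(T)\}$ is a bijection from the proper $m$-colorings of $T$ onto the $\mathcal{H}$-colorings of $T$, which finishes the argument.

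I expect the only real subtlety — and the place a careless argument would break — to be the consistency of the relabeling: one must be certain that relabeling $L(v)$ to trivialize the matching on the edge to its parent does not clash with some other constraint on $L(v)$. Acyclicity of $T$ is exactly what removes this danger, since at the moment $L(v)$ is processed it is constrained only by its (single) already-relabeled neighbor; more generally the same scheme shows that a connected graph all of whose $\mathcal{H}$-matchings are perfect admits a natural bijection iff the composite permutation of $[m]$ obtained by going around each cycle is the identity, and for a tree there are no cycles. The hypothesis $m \geq 2$ plays no role in the argument beyond ruling out degenerate covers.
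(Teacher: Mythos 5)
Your proposal is correct and is essentially the same argument as the paper's: the paper proceeds by induction on $n$, deleting a leaf and extending the inherited labeling of $L(v_{n-1})$ to $L(v_n)$ via the perfect matching, which is just the leaf-at-a-time version of your rooted BFS/DFS relabeling. The key point in both is identical — each set $L(v)$ is constrained by only one already-labeled neighbor, which acyclicity guarantees.
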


\begin{proof}
Our proof will be by induction on $n$.  Notice that the result is obvious for $n=1,2$.  So, suppose that $n \geq 3$ and the result holds for all natural numbers less than $n$.

Suppose that $v_n$ is a leaf of $T$, and $v_{n-1}$ is the only neighbor of $v_n$ in $T$.  Let $T' = T - \{v_n \}$.  For each $v \in V(T')$, let $L'(v) = L(v)$.  Also, let $H' = H - L(v_n)$.  Then, $\mathcal{H}' = (L',H')$ is an $m$-fold cover of $T'$ such that $E_H(L(u),L(v))$ is a perfect matching whenever $uv \in E(T')$.  The induction hypothesis tells us it is possible for each $v \in V(T')$ to let $L(v) = \{(v,j) : j \in [m] \}$ so that whenever $uv \in E(T')$, $(u,j)$ and $(v,j)$ are adjacent in $H'$ for each $j \in [m]$.  Now, for each $j \in [m]$ let $(v_n,j)$ be the vertex in $L(v_n)$ that is adjacent to $(v_{n-1},j)$ in $H$.  This completes the induction step.
\end{proof}

We now present two tools, Lemmas~\ref{lem: DPedgedelete} and~\ref{lem: DPpathdelete}, that we will use to find exact formulas for DP color functions of graphs that are close to being trees.  In order to develop these tools, we need one basic fact about proper colorings.

\begin{lem} \label{lem: ends}
Suppose that $G$ is a graph with $uv \in E(G)$.  Let $e = uv$.  For each $(i,j) \in [m] \times [m]$, let $C^{(i,j)}_m$ be the set of proper $m$-colorings of $G- \{e\}$ that color $u$ with $i$ and $v$ with $j$.  Then,
\\
(i)  There is an $r \in \N$ such that $|C^{(i,i)}_m|=r$ for each $i \in [m]$.
\\
(ii)  There is a $t \in \N$ such that $|C^{(i,j)}_m|=t$ whenever $i \neq j$ and $i,j \in [m]$.
\\
Consequently, $mr = P(G-\{e\},m) - P(G,m)$ and $m(m-1)t = P(G,m)$.
\end{lem}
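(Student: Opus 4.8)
The plan is to exploit the symmetry among colors $[m]$ via a color-permutation action. First I would prove part (i): fix $i, i' \in [m]$, let $\pi$ be the transposition swapping $i$ and $i'$, and consider the map $c \mapsto \pi \circ c$ on proper $m$-colorings of $G - \{e\}$. Since $G - \{e\}$ has no edge between $u$ and $v$, this map carries $C^{(i,i)}_m$ bijectively onto $C^{(i',i')}_m$ (applying $\pi$ to a proper coloring yields a proper coloring, and it sends the value $i$ on both $u$ and $v$ to the value $i'$ on both $u$ and $v$; the inverse is the same swap). Hence all the sets $C^{(i,i)}_m$ have a common size, call it $r$. For part (ii), fix two ordered pairs $(i,j)$ and $(i',j')$ with $i \neq j$ and $i' \neq j'$. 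Choose a permutation $\sigma$ of $[m]$ with $\sigma(i) = i'$ and $\sigma(j) = j'$ — this is possible precisely because $i \neq j$ forces $i' \neq j'$ to be realizable by a bijection. Then $c \mapsto \sigma \circ c$ is a bijection from $C^{(i,j)}_m$ to $C^{(i',j')}_m$, so all these sets share a common size $t$.

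Next I would verify the two consequences by partitioning the proper $m$-colorings of $G - \{e\}$ according to the pair of colors on $u$ and $v$. The colorings where $u$ and $v$ get the same color are exactly $\bigcup_{i \in [m]} C^{(i,i)}_m$, a disjoint union of $m$ sets each of size $r$, contributing $mr$. The colorings where $u$ and $v$ get different colors are exactly the proper $m$-colorings of $G$ itself (adding back the edge $e = uv$ excludes precisely the monochromatic-on-$\{u,v\}$ colorings and keeps all others), and these split into $m(m-1)$ sets $C^{(i,j)}_m$ with $i \neq j$, each of size $t$, so $P(G,m) = m(m-1)t$. Subtracting gives $mr = P(G - \{e\}, m) - P(G,m)$.

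I do not expect a serious obstacle here; the only point requiring a little care is the claim that $C^{(i,j)}_m$ with $i \neq j$ are the colorings of $G$ — i.e.\ that deleting $e$ and then restricting to colorings that are non-monochromatic on $\{u,v\}$ recovers exactly the proper colorings of $G$ — which is immediate from the definition of proper coloring. One should also note the statement's implicit convention that $r$ and $t$ are nonnegative integers (the lemma writes $r, t \in \N$, presumably allowing $0$), and that if $G - \{e\}$ is disconnected with $u, v$ in different components then $t$ could a priori differ from the connected case, but the permutation argument above does not use connectivity at all, so it goes through verbatim. The whole argument is essentially the standard observation that the chromatic polynomial's combinatorial interpretation is invariant under relabeling colors.
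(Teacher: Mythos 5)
Your proof is correct. The paper states Lemma~\ref{lem: ends} with no proof at all, introducing it only as ``one basic fact about proper colorings,'' so there is nothing to compare against; the color-permutation (relabelling) argument you give, together with the partition of the proper $m$-colorings of $G-\{e\}$ by the colors on $u$ and $v$, is exactly the standard justification the authors are implicitly relying on. Your side remark is also apt: since the paper's convention is $\N=\{1,2,3,\ldots\}$, the value $r=\frac{1}{m}\left(P(G-\{e\},m)-P(G,m)\right)$ can be $0$ for small $m$, so the lemma's ``$r\in\N$'' is a slight imprecision in the statement rather than a gap in your argument.
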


Now, we apply Lemma~\ref{lem: ends} along with the notion of natural bijection to prove Lemma~\ref{lem: DPedgedelete} which we will use to determine the DP color function of unicyclic graphs in Section~\ref{exact}.

\begin{lem} \label{lem: DPedgedelete}
Suppose $G$ is a graph and $\mathcal{H} = (L,H)$ is an $m$-fold cover of $G$ with $m \geq 2$.  Suppose $e \in E(G)$ and $e=uv$. Let $H' = H - E_H(L(u),L(v))$ so that $\mathcal{H}' = (L,H')$ is an $m$-fold cover of $G - \{e\}$.  If there is a natural bijection between the $\mathcal{H}'$-colorings of $G-\{e\}$ and the proper $m$-colorings of $G- \{e\}$, then
$$P_{DP}(G, \mathcal{H}) \geq P(G - \{e\},m) - \max \left \{P(G - \{e\},m) - P(G,m) , \frac{P(G,m)}{m-1} \right \}.$$
Moreover, there exists an $m$-fold cover of $G$, $\mathcal{H}^* = (L,H^*)$, such that
$$P_{DP}(G, \mathcal{H}^*) = P(G - \{e\},m) - \max \left \{P(G - \{e\},m) - P(G,m) , \frac{P(G,m)}{m-1} \right \}.$$
\end{lem}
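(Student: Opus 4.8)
The plan is to use the natural bijection to re-express the $\mathcal{H}$-coloring count of $G$ as the proper $m$-coloring count of $G-\{e\}$ with certain color pairs on $u,v$ forbidden, and then to optimize this forbidden set over all matchings on $E_H(L(u),L(v))$ using Lemma~\ref{lem: ends}.

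First I would fix an arbitrary $m$-fold cover $\mathcal{H}=(L,H)$ of $G$ and, since $\mathcal{H}'=(L,H')$ admits a natural bijection, relabel so that $L(w)=\{(w,j):j\in[m]\}$ and $(w,j)(x,j)\in E(H')$ for every $wx\in E(G-\{e\})$. Since $H=H'$ together with the matching $M:=E_H(L(u),L(v))$, we may write $M=\{(u,i)(v,\phi(i)):i\in\mathrm{dom}(\phi)\}$ for some partial injection $\phi$ on $[m]$. The $\mathcal{H}'$-coloring of $G-\{e\}$ corresponding under the natural bijection to a proper $m$-coloring $c$ of $G-\{e\}$ is an $\mathcal{H}$-coloring of $G$ if and only if $(u,c(u))(v,c(v))\notin M$; hence the proper $m$-colorings of $G-\{e\}$ that fail to give $\mathcal{H}$-colorings of $G$ are exactly the pairwise disjoint sets $C^{(i,\phi(i))}_m$ over $i\in\mathrm{dom}(\phi)$, in the notation of Lemma~\ref{lem: ends}, so that
$$P_{DP}(G,\mathcal{H})=P(G-\{e\},m)-\sum_{i\in\mathrm{dom}(\phi)}\bigl|C^{(i,\phi(i))}_m\bigr|.$$
By Lemma~\ref{lem: ends}, if $\phi$ has $a$ fixed points and $b$ non-fixed points in its domain, then this sum equals $ar+bt$, where $mr=P(G-\{e\},m)-P(G,m)$ and $m(m-1)t=P(G,m)$. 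Since $r,t\ge 0$ and $a+b\le m$, we get $ar+bt\le\max\{mr,mt\}=\max\{P(G-\{e\},m)-P(G,m),\,P(G,m)/(m-1)\}$, which is exactly the claimed lower bound.

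For the attainment statement I would build $\mathcal{H}^*=(L,H^*)$ by taking the canonical cover structure that realizes the natural bijection for $G-\{e\}$ and attaching on $E_{H^*}(L(u),L(v))$ either the diagonal perfect matching $\{(u,i)(v,i):i\in[m]\}$ (when $r\ge t$) or the perfect matching $\{(u,i)(v,\sigma(i)):i\in[m]\}$ for a fixed-point-free permutation $\sigma$ of $[m]$ (when $t>r$; such a $\sigma$ exists since $m\ge 2$). This is a legitimate $m$-fold cover of $G$, and for it the displayed sum equals exactly $mr$ or $mt$, so equality holds. The only genuinely delicate point is the reduction in the previous paragraph: one must verify that, under the natural bijection, the $\mathcal{H}$-colorings of $G$ are precisely the proper $m$-colorings of $G-\{e\}$ whose endpoint colors do not form a matched pair of $M$, and that the colorings removed form a disjoint union of the sets $C^{(i,\phi(i))}_m$; once this is established, Lemma~\ref{lem: ends} together with the trivial bound $a+b\le m$ on matching sizes finishes both the inequality and its sharpness.
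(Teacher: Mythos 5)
Your proposal is correct and follows essentially the same route as the paper's proof: both use the natural bijection to identify the $\mathcal{H}'$-colorings with proper $m$-colorings of $G-\{e\}$, invoke Lemma~\ref{lem: ends} to count the colorings killed by each matching edge ($r$ for diagonal edges, $t$ for off-diagonal), bound the total by $\max\{mr,mt\}$ using that the matching has at most $m$ edges, and realize the bound with either the identity matching or a fixed-point-free one (the paper uses the cyclic shift $j\mapsto j+1$).
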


\begin{proof}
We clearly have that there are $P(G - \{e\},m)$ $\mathcal{H}'$-colorings of $G - \{e\}$.  Lemma~\ref{lem: ends} implies that if $i = j$ there are precisely $\frac{1}{m} (P(G - \{e\},m) - P(G,m))$ $\mathcal{H}'$-colorings of $G-\{e\}$ that contain $(u,i)$ and $(v,j)$.  Similarly, if $i \neq j$ there are precisely $\frac{P(G,m)}{m(m-1)}$ $\mathcal{H}'$-colorings of $G-\{e\}$ that contain $(u,i)$ and $(v,j)$.

Since $|E_H(L(u),L(v))| \leq m$, it immediately follows that
$$P_{DP}(G, \mathcal{H}) \geq P(G - \{e\},m) - \max \left \{P(G - \{e\},m) - P(G,m) , \frac{P(G,m)}{m-1} \right \}.$$

Finally, we form $H^*$ as follows.  If $P(G - \{e\},m) - P(G,m) \geq \frac{P(G,m)}{m-1}$, starting from $H'$, draw an edge between $(u,j)$ and $(v,j)$ for each $j \in [m]$.  If $P(G - \{e\},m) - P(G,m) < \frac{P(G,m)}{m-1}$, starting from $H'$, draw an edge between $(u,j)$ and $(v,j+1)$ for each $j \in [m-1]$ and draw an edge between $(u,m)$ and $(v,1)$.  It is clear that in either case the $m$-fold cover $\mathcal{H}^* = (L,H^*)$ has the desired property.
\end{proof}

Lemma~\ref{lem: DPedgedelete} easily implies Proposition~\ref{cor: polynomial} which we used in Section~\ref{asymptotics}.

\begin{custompro} {\bf \ref{cor: polynomial}}
Suppose that $G$ is a graph with $uv \in E(G)$.  Let $e = uv$. If $m \geq 2$ and
$$ P(G- \{e\} , m) < \frac{m}{m-1} P(G,m),$$
then $P_{DP}(G,m) < P(G,m)$.
\end{custompro}

\begin{proof}
We construct an $m$-fold cover of $G$ as follows.  For each $w \in V(G)$ and $j \in [m]$, let $L(w) = \{(w,j) : j \in [m] \}$.  Let $H$ be the graph with vertex set $\bigcup_{w \in V(G)} L(w)$ and edges drawn so that for each $w \in V(G)$, the vertices in $L(w)$ are pairwise adjacent and for each $xy \in E(G)$, $(x,j)$ is adjacent to $(y,j)$ for each $j \in [m]$.  Then, $\mathcal{H} = (L,H)$ is an $m$-fold cover of $G$.  Furthermore, using the notation of Lemma~\ref{lem: DPedgedelete}, there is a natural bijection between the $\mathcal{H}'$-colorings of $G-\{e\}$ and the proper $m$-colorings of $G- \{e\}$.  So, Lemma~\ref{lem: DPedgedelete} implies that there exists an $m$-fold cover of $G$, $\mathcal{H}^* = (L,H^*)$, such that
$$P_{DP}(G, \mathcal{H}^*) = P(G - \{e\},m) - \max \left \{P(G - \{e\},m) - P(G,m) , \frac{P(G,m)}{m-1} \right \}.$$
Note that $P(G- \{e\} , m) < \frac{m}{m-1} P(G,m)$ implies $P(G - \{e\},m) - P(G,m) < \frac{P(G,m)}{m-1}$.  So, we have that
$$P_{DP}(G,m) \leq P_{DP}(G, \mathcal{H}^*) = P(G - \{e\},m) - \frac{P(G,m)}{m-1} < P(G,m).$$
\end{proof}

We now generalize the proof idea of Lemma~\ref{lem: DPedgedelete} in order to obtain another useful tool.

\begin{lem} \label{lem: DPpathdelete}
Suppose $G$ is a graph and $\mathcal{H} = (L,H)$ is an $m$-fold cover of $G$ with $m \geq 3$.  Suppose $\alpha_1, \alpha_2, \alpha_3$ is a path of length two in $G$ and $\alpha_1 \alpha_3 \notin E(G)$.  Let $e_1 = \alpha_1 \alpha_2$ and $e_2 = \alpha_2 \alpha_3$.  Then, let $G_0 = G- \{e_1, e_2 \}$, $G_1 = G- \{e_1 \}$, $G_2 = G- \{e_2 \}$, and $G^*$ be the graph obtained from $G$ by adding an edge between $\alpha_1$ and $\alpha_3$.  Let $H' = H - (E_H(L(\alpha_1),L(\alpha_2)) \cup E_H(L(\alpha_2),L(\alpha_3)))$ so that $\mathcal{H}' = (L,H')$ is an $m$-fold cover of $G_0$.  Suppose that there is a natural bijection between the $\mathcal{H}'$-colorings of $G_0$ and the proper $m$-colorings of $G_0$. Let
\begin{align*}
& A_1 = P(G_0,m)- P(G,m), \\
& A_2 = P(G_0,m) - P(G_2,m) + \frac{1}{m-1} P(G,m), \\
& A_3 = P(G_0,m) - P(G_1,m) + \frac{1}{m-1} P(G,m), \\
& A_4 = \frac{1}{m-1} \left ( P(G_1,m)+P(G_2,m)+P(G^*,m) - P(G,m) \right),  \; \text{and} \\
& A_5 = \frac{1}{m-1} \left ( P(G_1,m)+P(G_2,m) - \frac{1}{m-2} P(G^*,m) \right).
\end{align*}
Then,
$$P_{DP}(G, \mathcal{H}) \geq P(G_0,m) - \max \{A_1, A_2, A_3, A_4, A_5 \}.$$
Moreover, there exists an $m$-fold cover of $G$, $\mathcal{H}^*$, such that
$$P_{DP}(G, \mathcal{H}^*) = P(G_0,m) - \max \{A_1, A_2, A_3, A_4, A_5 \}.$$
\end{lem}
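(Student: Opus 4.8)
The plan is to follow the template of Lemma~\ref{lem: DPedgedelete}, but now track the two deleted edges $e_1=\alpha_1\alpha_2$ and $e_2=\alpha_2\alpha_3$ at once via inclusion--exclusion. Using the hypothesized natural bijection between the $\mathcal{H}'$-colorings of $G_0$ and the proper $m$-colorings of $G_0$, fix coordinates $L(v)=\{(v,j):j\in[m]\}$ so that $(u,j)\sim_{H'}(v,j)$ for every $j$ whenever $uv\in E(G_0)$. Then the edges of $H$ not in $H'$ form two matchings $M_1\subseteq E_H(L(\alpha_1),L(\alpha_2))$ and $M_2\subseteq E_H(L(\alpha_2),L(\alpha_3))$; write $M_1$ as the set of pairs $(\alpha_1,i)$--$(\alpha_2,\sigma_1(i))$ and $M_2$ as the set of pairs $(\alpha_2,j)$--$(\alpha_3,\sigma_2(j))$ for injections $\sigma_1,\sigma_2$ defined on subsets of $[m]$. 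Under the bijection $f$, the $\mathcal{H}$-colorings of $G$ correspond exactly to the proper $m$-colorings $c$ of $G_0$ for which the edge of $M_1$ at $(c(\alpha_1),c(\alpha_2))$ and the edge of $M_2$ at $(c(\alpha_2),c(\alpha_3))$ are both absent, so $P_{DP}(G,\mathcal{H})=P(G_0,m)-|K_1\cup K_2|$, where $K_1$ (resp.\ $K_2$) is the set of proper colorings of $G_0$ killed by $M_1$ (resp.\ $M_2$). Enlarging $M_1$ or $M_2$ only enlarges $K_1\cup K_2$, so for the inequality it suffices to handle the case where $M_1,M_2$ are perfect matchings, i.e.\ $\sigma_1,\sigma_2$ are permutations of $[m]$; for the ``moreover'' part I will exhibit explicit perfect matchings realizing each $A_i$.

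The next step is a three-vertex analogue of Lemma~\ref{lem: ends}. For a proper $m$-coloring $c$ of $G_0$ let $N(i,j,k)$ be the number with $c(\alpha_1)=i$, $c(\alpha_2)=j$, $c(\alpha_3)=k$. Since $\alpha_1\alpha_2,\alpha_2\alpha_3,\alpha_1\alpha_3\notin E(G_0)$, composing $c$ with a permutation of $[m]$ shows $N(i,j,k)$ depends only on which of $i,j,k$ coincide; call the five resulting values $p_1$ (all equal), $p_2$ ($i=j\ne k$), $p_3$ ($i=k\ne j$), $p_4$ ($j=k\ne i$), $p_5$ (all distinct). Summing $N$ over the triples admissible for $G_0,G,G_1,G_2,G^*$ expresses $P(G_0,m),P(G,m),P(G_1,m),P(G_2,m),P(G^*,m)$ as explicit nonnegative-integer combinations of $p_1,\dots,p_5$ (e.g.\ $P(G^*,m)=m(m-1)(m-2)p_5$ and $P(G,m)=m(m-1)p_3+m(m-1)(m-2)p_5$); since $m\ge 3$ this linear system is invertible, so each $A_i$ can be rewritten in terms of the $p_i$, for instance $A_1=P(G_0,m)-P(G,m)=mp_1+m(m-1)(p_2+p_4)$.

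Now I compute $|K_1|,|K_2|,|K_1\cap K_2|$ in terms of the $p_i$ and the combinatorial invariants $f_1=|\mathrm{Fix}\,\sigma_1|$, $f_2=|\mathrm{Fix}\,\sigma_2|$, $a=|\mathrm{Fix}\,\sigma_1\cap\mathrm{Fix}\,\sigma_2|$, and $c=|\{i\notin\mathrm{Fix}\,\sigma_1:\sigma_2\sigma_1(i)=i\}|$ (the one further count needed is forced to equal $f_2-a$, because $\sigma_1$ permutes $[m]\setminus\mathrm{Fix}\,\sigma_1$). This makes $|K_1\cup K_2|=|K_1|+|K_2|-|K_1\cap K_2|$ an affine function of $(f_1,f_2,a,c)$ whose coefficients are nonnegative combinations of $p_1,\dots,p_5$, over the polytope cut out by $0\le f_1,f_2\le m$, $\max(0,f_1+f_2-m)\le a\le\min(f_1,f_2)$, and $0\le c\le m-f_1-f_2+a$. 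An affine function on a polytope is maximized at a vertex, and iterating ``fix all variables but one, which is then affine in that variable, so push it to an endpoint of its feasible range'' --- first $f_1\to\{0,m\}$, then $f_2$, then $a$ and $c$ --- leaves exactly five configurations, whose values of $|K_1\cup K_2|$ work out to $A_1,\dots,A_5$; hence $|K_1\cup K_2|\le\max\{A_1,\dots,A_5\}$ for every cover, which is the stated inequality. For the ``moreover'' part I let $\mathcal{H}^*$ be the natural-bijection cover of $G_0$ augmented by the perfect matchings of whichever configuration attains the maximum: $\sigma_1=\sigma_2=\mathrm{id}$ gives $A_1$; $\sigma_1=\mathrm{id}$ with $\sigma_2$ a cyclic shift gives $A_2$ (and the mirror image gives $A_3$); $\sigma_1$ a cyclic shift with $\sigma_2=\sigma_1^{-1}$ gives $A_4$; and $\sigma_1=\sigma_2=$ a cyclic shift gives $A_5$ --- here $m\ge 3$ is exactly what keeps $\sigma_2\sigma_1$ fixed-point-free (and keeps the denominator $m-2$ in $A_5$ positive). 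Recomputing $|K_1\cup K_2|$ for that configuration (or applying Lemma~\ref{lem: ends} to $G_1,G_2,G^*$) gives $P_{DP}(G,\mathcal{H}^*)=P(G_0,m)-\max\{A_1,\dots,A_5\}$.

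I expect the main obstacle to be the vertex enumeration in the third paragraph: one must correctly identify the polytope of realizable $(f_1,f_2,a,c)$ --- in particular spotting the forced identity $b=f_2-a$ and the coupling $c\le m-f_1-f_2+a$ --- and then verify that each of the five candidate vertices is both feasible and attained by an honest pair of permutations (so that the upper bound and the construction match). The algebra converting the $A_i$ into combinations of the $p_i$ and back is routine but is where indexing and sign errors are easy to make, so it should be carried out carefully.
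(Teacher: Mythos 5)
Your proof is correct, and the computations you defer do check out: I verified that the five vertex values of your affine function are exactly $A_1,\dots,A_5$, and that your five explicit pairs $(\sigma_1,\sigma_2)$ realize them when $m\geq 3$. The route differs from the paper's in how the final maximization is organized. The paper likewise reduces to perfect matchings and classifies by the same five coincidence patterns of the triple $(j,t,r)$, but it decomposes $E_H(L(\alpha_1),L(\alpha_2))\cup E_H(L(\alpha_2),L(\alpha_3))$ into $m$ vertex-disjoint three-vertex paths; since every $\mathcal{H}'$-coloring meets $L(\alpha_2)$ in exactly one vertex, the sets of colorings killed by distinct paths are disjoint, and a direct count (Lemma~\ref{lem: ends} plus inclusion--exclusion applied to $G_1,G_2,G,G^*$) shows a path of type $i$ kills exactly $\frac{1}{m}A_i$ colorings, so summing over the $m$ paths gives a convex combination of the $A_i$ and hence the bound, with no linear programming needed. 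Your version replaces the per-path disjointness observation with a single global inclusion--exclusion $|K_1\cup K_2|=|K_1|+|K_2|-|K_1\cap K_2|$, a three-vertex analogue of Lemma~\ref{lem: ends} (the pattern counts $p_1,\dots,p_5$), and an optimization over the statistics $(f_1,f_2,a,c)$; the substitution $\lambda_1=a$, $\lambda_2=f_1-a$, $\lambda_3=f_2-a$, $\lambda_4=c$, $\lambda_5=m-f_1-f_2+a-c$ shows your polytope is the simplex $\{\lambda_i\geq 0,\ \sum_i\lambda_i=m\}$ and that $\lambda_i$ is precisely the number of paths of type $i$ in the paper's decomposition, so the two arguments compute the same quantity. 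What your approach buys is a mechanical vertex enumeration (once the simplex structure is visible) and fully explicit matchings for the ``moreover'' clause, which the paper only asserts in one sentence; what the paper's buys is brevity, since disjointness makes the killed count additive over paths. One small slip worth fixing: the coefficients of your affine function in the variables $(f_1,f_2,a,c)$ are \emph{not} nonnegative combinations of $p_1,\dots,p_5$ (e.g.\ the coefficient of $f_1$ works out to $p_1+(m-2)p_2-p_3-p_4-(m-3)p_5$); this is harmless because maximizing an affine function over a polytope requires no sign condition, and the nonnegativity that does hold---making the bound a convex combination of the $A_i$---appears only after passing to the $\lambda$-coordinates.
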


\begin{proof}
We may assume that $|E_H(L(\alpha_1),L(\alpha_2))|=|E_H(L(\alpha_2),L(\alpha_3))|=m$ (since adding edges to $H$ only reduces the number of $\mathcal{H}$-colorings of $G$).  Let $H''$ be the graph with $V(H'') = \bigcup_{i=1}^3 L(\alpha_i)$ and $E(H'')= E_H(L(\alpha_1),L(\alpha_2)) \cup E_H(L(\alpha_2),L(\alpha_3))$.  Clearly $H''$ can be decomposed into $m$ vertex disjoint paths on three vertices of the form: $(\alpha_1,j), (\alpha_2, t), (\alpha_3, r)$ where $j, t, r \in [m]$.  For a given $\mathcal{H}'$-coloring of $G_0$, $I$, the only way that $I$ is not also an $\mathcal{H}$-coloring of $G$ is if $H[I]$ contains at least one edge from one of these aforementioned paths.  For each such path, there are five possibilities for $j$, $t$, and $r$: (1) $j=t=r$, (2) $j=t$ and $j \neq r$, (3) $j \neq t$ and $t=r$, (4) $j \neq t$ and $j = r$, and (5) $j$, $t$, and $r$ are pairwise distinct.  For a given path on three vertices in $H''$ of the form: $(\alpha_1,j), (\alpha_2, t), (\alpha_3, r)$, we will now count the number of $\mathcal{H}'$-colorings of $G_0$ that contain both $(\alpha_1,j)$ and $(\alpha_2,t)$ or contain both $(\alpha_2,t)$ and $(\alpha_3,r)$ in each of the five possible cases.

For case (1) the number of such $\mathcal{H}'$-colorings equals the number of proper $m$-colorings of $G_0$ that color both $\alpha_1$ and $\alpha_2$ with $j$ or color both $\alpha_2$ and $\alpha_3$ with $j$.  Note $P(G,m)$ is the number of proper $m$-colorings of $G_0$ such that both $\alpha_1$ and $\alpha_2$ get different colors and both $\alpha_2$ and $\alpha_3$ get different colors.  So, we get that the number of proper $m$-colorings of $G_0$ that color both $\alpha_1$ and $\alpha_2$ with $j$ or color both $\alpha_2$ and $\alpha_3$ with $j$ is:
$$\frac{1}{m} (P(G_0,m)- P(G,m)) = \frac{1}{m} A_1.$$

For case (2) the number of such $\mathcal{H}'$-colorings equals the number of proper $m$-colorings of $G_0$ that color both $\alpha_1$ and $\alpha_2$ with $j$ or color $\alpha_2$ with $j$ and $\alpha_3$ with $r$.  The number of proper $m$-colorings of $G_1$ that are not proper $m$-colorings of $G$ because $e_1$ is monochromatic is $P(G_1,m)-P(G,m)$.  So, the number of proper $m$-colorings of $G_0$ that color $\alpha_1$ and $\alpha_2$ with $j$ and color $\alpha_3$ with $r$ is $\frac{1}{m(m-1)}(P(G_1,m)-P(G,m))$.  Using Lemma~\ref{lem: ends} and the inclusion-exclusion principle, we get that the number of proper $m$-colorings of $G_0$ that color both $\alpha_1$ and $\alpha_2$ with $j$ or color $\alpha_2$ with $j$ and $\alpha_3$ with $r$ is:
$$ \frac{1}{m} (P(G_0,m) - P(G_2,m)) + \frac{1}{m(m-1)} P(G_1,m) - \frac{1}{m(m-1)}(P(G_1,m)-P(G,m)) = \frac{1}{m} A_2. $$
A similar argument shows that we get $\frac{1}{m} A_3$ such colorings in case (3).

For case (4) the number of such $\mathcal{H}'$-colorings equals the number of proper $m$-colorings of $G_0$ that color $\alpha_1$ with $j$ and $\alpha_2$ with $t$ or color $\alpha_2$ with $t$ and $\alpha_3$ with $j$.  The number of proper $m$-colorings of $G$ that are not proper $m$-colorings of $G^*$ because $\alpha_1 \alpha_3$ is monochromatic is $P(G,m)-P(G^*,m)$.  So, the number of proper $m$-colorings of $G_0$ that color $\alpha_1$ and $\alpha_3$ with $j$ and color $\alpha_2$ with $t$ is $\frac{1}{m(m-1)}(P(G,m)-P(G^*,m))$.  Using Lemma~\ref{lem: ends} and the inclusion-exclusion principle, we get that the number of proper $m$-colorings of $G_0$ that color $\alpha_1$ with $j$ and $\alpha_2$ with $t$ or color $\alpha_2$ with $t$ and $\alpha_3$ with $j$ is:
$$ \frac{1}{m(m-1)} P(G_2,m) + \frac{1}{m(m-1)} P(G_1,m) - \frac{1}{m(m-1)}(P(G,m)-P(G^*,m)) = \frac{1}{m} A_4. $$

For case (5) the number of such $\mathcal{H}'$-colorings equals the number of proper $m$-colorings of $G_0$ that color $\alpha_1$ with $j$ and $\alpha_2$ with $t$ or color $\alpha_2$ with $t$ and $\alpha_3$ with $r$.  The number of proper $m$-colorings of $G_0$ that color $\alpha_1$ with $j$, $\alpha_2$ with $t$, and $\alpha_3$ with $r$ is $\frac{1}{m(m-1)(m-2)}P(G^*,m)$.  Using Lemma~\ref{lem: ends} and the inclusion-exclusion principle, we get that the number of proper $m$-colorings of $G_0$ that color $\alpha_1$ with $j$ and $\alpha_2$ with $t$ or color $\alpha_2$ with $t$ and $\alpha_3$ with $r$ is:
$$ \frac{1}{m(m-1)} P(G_2,m) + \frac{1}{m(m-1)} P(G_1,m) - \frac{1}{m(m-1)(m-2)}P(G^*,m) = \frac{1}{m} A_5. $$
These computations along with the fact that $H''$ can be decomposed into $m$ vertex disjoint paths on three vertices implies that
$$P_{DP}(G, \mathcal{H}) \geq P_{DP}(G_0, \mathcal{H}') - m \cdot \frac{1}{m} \max \{A_i : i \in [5] \}= P(G_0,m) - \max \{A_i : i \in [5] \}$$
as desired.

Finally, the fact that there is an $m$-fold cover of $G$ that allows us to achieve the above lower bound follows from the fact that it is possible to draw the edges in $E_H(L(\alpha_1),L(\alpha_2)) \cup E_H(L(\alpha_2),L(\alpha_3))$ so that all of the $m$ vertex disjoint paths on three vertices in $H''$ have the form described by case (i) where $i \in [5]$.
\end{proof}

\section{Unicyclic Graphs and Cycles with a Chord} \label{exact}

We begin this section by showing how Lemma~\ref{lem: DPedgedelete} can be applied to prove Theorem~\ref{thm: onecycle} (i.e. yield a formula for the DP color function of any unicyclic graph).  A \emph{unicyclic graph} is a connected graph containing exactly one cycle.  It is easy to prove that if $G$ is a unicyclic graph on $n$ vertices that contains a cycle on $t$ vertices, then
$$P(G,m) = (m-1)^{n-t}P(C_t,m) = (m-1)^n + (-1)^t (m-1)^{n-t+1}.$$

\begin{customthm} {\bf \ref{thm: onecycle}}
Suppose $G$ is a unicyclic graph on $n$ vertices.  Then the following statements hold.
\\
(i) For $m \in \N$, if $G$ contains a cycle on $2k+1$ vertices, then $P_{DP}(G,m) = P(G,m)$.
\\
(ii)  For $m \geq 2$, if $G$ contains a cycle on $2k+2$ vertices, then $$P_{DP}(G,m) = (m-1)^n - (m-1)^{n-2k-2}.$$
\end{customthm}

\begin{proof}
Suppose $\mathcal{H} = (L,H)$ is an arbitrary $m$-fold cover of $G$ with $m \geq 2$.  If $uv \in E(G)$, we will assume that $E_H(L(u),L(v))$ is a perfect matching since adding edges to $H$ can only make the number of $\mathcal{H}$-colorings of $G$ smaller.    Suppose $e$ is an edge on the cycle contained in $G$.  Then, $G - \{e\}$ is a tree, and we know that $P(G- \{e\},m)= m(m-1)^{n-1}$.  Proposition~\ref{pro: treeDP} and Lemma~\ref{lem: DPedgedelete}, then imply that
$$P_{DP}(G, \mathcal{H}) \geq m(m-1)^{n-1} - \max \left \{m(m-1)^{n-1} - P(G,m) , \frac{P(G,m)}{m-1} \right \}.$$
Now, if $G$ contains a cycle on $2k+1$ vertices, we know that $P(G,m) = (m-1)^n - (m-1)^{n-2k}$ and
$$P_{DP}(G, \mathcal{H}) \geq m(m-1)^{n-1} - (m(m-1)^{n-1} - [(m-1)^n - (m-1)^{n-2k}])=P(G,m).$$
Since $\mathcal{H}$ was arbitrary, this completes the proof of Statement~(i).  If $G$ contains a cycle on $2k+2$ vertices, we know that $P(G,m) = (m-1)^n + (m-1)^{n-2k-1}$ and
$$P_{DP}(G, \mathcal{H}) \geq  m(m-1)^{n-1} - \frac{P(G,m)}{m-1} = (m-1)^n - (m-1)^{n-2k-2}.$$
This implies that $P_{DP}(G,m) \geq (m-1)^n - (m-1)^{n-2k-2}$.  Finally, Lemma~\ref{lem: DPedgedelete} tells us that there is an $m$-fold cover of $G$, $\mathcal{H}^*$, for which there are precisely $(m-1)^n - (m-1)^{n-2k-2}$ $\mathcal{H}^*$-colorings of $G$.  This completes the proof of Statement~(ii).
\end{proof}

So, if $G$ is a unicyclic graph on $n$ vertices that contains a cycle on $4$ vertices, then
$$P(G,m)-P_{DP}(G,m)= (m-1)^n + (m-1)^{n-3} - [(m-1)^n - (m-1)^{n-4}] = O(m^{n-3}).$$
Asymptotically, we know of no graph with a larger gap between its chromatic polynomial and DP color function than that of a unicyclic graph that contains a cycle on $4$ vertices (see Question~\ref{ques: asymptotic}).

We end this section by showing how Lemma~\ref{lem: DPpathdelete} can be used to find formulas for the DP color function of a cycle plus a chord.  Note that the answer depends on the parity of the lengths of the two maximal cycles properly contained in such a graph.

\begin{thm} \label{thm: cyclepluschord}
The following statements hold.
\\
(i)  Suppose $H = C_{2k+1}$ and $M = C_{2l+1}$ where $l \geq k \geq 1$.  Suppose $H$ and $M$ share exactly two vertices and one edge, and suppose $G = H \oplus M$.  Then, $P_{DP} (G, m) = P(G, m)$ for any $m \in \N$.
\\
(ii)  Suppose $H = C_{2k+2}$ and $M = C_{2l+2}$ where $l \geq k \geq 1$.  Suppose $H$ and $M$ share exactly two vertices and one edge, and suppose $G = H \oplus M$.  Then,
$$P_{DP} (G, m) = \frac{1}{m}[(m-1)^{2k+2l+3}-(m-1)^{2l+1} - (m-1)^{2k+1} - m - 1]$$
whenever $m \geq 3$.
\\
(iii)  Suppose $H = C_{2k+1}$ and $M = C_{2l+2}$ where $l, k \geq 1$.  Suppose $H$ and $M$ share exactly two vertices and one edge, suppose and $G = H \oplus M$.  Then,
$$P_{DP} (G, m) = \frac{1}{m}[(m-1)^{2k+2l+2}-(m-1)^{2k} - (m-1)^{2l+2} + 1]$$
whenever $m \geq 2$.
\end{thm}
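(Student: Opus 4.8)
The plan is to realize $G$ concretely and then apply Lemma~\ref{lem: DPpathdelete} exactly once, with a carefully chosen path of length two; the three formulas will come out of identifying which of $A_1,\dots,A_5$ is the maximum in each parity case. Write the common edge of $H$ and $M$ as $uv$. Deleting $u$ and $v$ exhibits $G$ as a union of three internally disjoint $u$--$v$ paths: the edge $uv$ itself, a path $u,h_1,\dots,v$ of length $a-1$ lying inside $H$ (so $a=2k+1$ or $a=2k+2$ depending on the case), and a path $u,m_1,\dots,v$ of length $b-1$ lying inside $M$. Set $n=|V(G)|=a+b-2$, and observe that $h_1\neq m_1$ and $h_1m_1\notin E(G)$.

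The reduction step is to apply Lemma~\ref{lem: DPpathdelete} with $\alpha_1=h_1$, $\alpha_2=u$, $\alpha_3=m_1$, so that $e_1=uh_1$ and $e_2=um_1$. With this choice $G_0=G-\{e_1,e_2\}$ is a tree (the three arcs, each with its first edge removed, hung off the common vertex $v$), hence $P(G_0,m)=m(m-1)^{n-1}$; moreover, since $P_{DP}(G,m)$ is attained by a cover in which every matching is perfect (adding edges to a cover only decreases its number of colorings), Proposition~\ref{pro: treeDP} supplies the natural bijection needed to invoke the lemma. The remaining auxiliary graphs are easy to recognize: $G_1=G-\{e_1\}$ and $G_2=G-\{e_2\}$ are unicyclic, their unique cycle being $M$ (respectively $H$) together with a pendant path attached at $v$, so their chromatic polynomials follow from the unicyclic formula recalled before Theorem~\ref{thm: onecycle}; and $G^*=G+h_1m_1$ is a triangle on $\{u,h_1,m_1\}$ with three internally disjoint paths, of lengths $1$, $a-2$, $b-2$, joining its three vertices to the common endpoint $v$.

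The rest is computation. To evaluate $P(G^*,m)$ I would first properly color the triangle (in $m(m-1)(m-2)$ ways) and the vertex $v$, and then multiply in the number of ways to extend the coloring over each of the three paths, using the standard facts that a path of length $p$ has $\frac1m\big((m-1)^p+(-1)^p(m-1)\big)$ proper $m$-colorings with a prescribed common endpoint color and $\frac1m\big((m-1)^p-(-1)^p\big)$ with a prescribed pair of distinct endpoint colors; this is where the parities of $a$ and $b$ enter. Substituting the resulting expressions for $P(G_0,m)$, $P(G_1,m)$, $P(G_2,m)$, $P(G^*,m)$, $P(G,m)$ into the definitions of $A_1,\dots,A_5$ and simplifying shows, for $m\ge 3$, that the maximum of the $A_i$ is $A_1$ in case (i) (so $P_{DP}(G,m)=P(G_0,m)-A_1=P(G,m)$), is $A_5$ in case (ii), and in case (iii) is the term built from whichever of $G_1,G_2$ still contains the odd cycle. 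In every case $P_{DP}(G,m)=P(G_0,m)-\max_i A_i$, and the displayed formula falls out after routine algebra. The small cases $m\le 2$ are immediate: then $G$ contains an odd cycle in parts (i) and (iii), so $P(G,m)=P_{DP}(G,m)=0$, which also agrees with the stated formula at $m=2$ in (iii).

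The main obstacle is the bookkeeping in that last paragraph: obtaining the closed form for $P(G^*,m)$ — the chromatic polynomial of a triangle with three internally disjoint paths to a common vertex — and then certifying, uniformly in $k$ and $l$, exactly which $A_i$ is the maximum and that the remaining four are strictly smaller once $m\ge 3$. Both require keeping track of parities and of lower-order terms with some care, but neither is conceptually difficult; everything else is a direct application of Lemma~\ref{lem: DPpathdelete} together with the tree and unicyclic chromatic-polynomial formulas.
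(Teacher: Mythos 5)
Your proposal is correct and uses the paper's strategy exactly: delete a length-two path so that $G_0$ is a tree, pass to perfect matchings so that Proposition~\ref{pro: treeDP} supplies the natural bijection, and apply Lemma~\ref{lem: DPpathdelete} once, reducing everything to identifying $\max_i A_i$. The one substantive difference is your choice of the path: the paper centers it at an endpoint of the shared edge and takes $e_1$ to be the shared edge itself, so that $G^*$ becomes an edge-clique-sum of $H$, a triangle, and a shortened copy of $M$, and $P(G^*,m)$ falls out of the clique-sum formula; your symmetric choice centered at $u$ with both deleted edges non-shared makes $G^*$ a triangle with three internally disjoint paths to a common vertex, which is not a clique-sum and forces the longer path-extension computation you flag as the main obstacle. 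In exchange, your choice never risks violating $\alpha_1\alpha_3 \notin E(G)$, so it treats the subcase $k=l=1$ of (i) uniformly, whereas the paper must dispose of it separately via Theorem~\ref{thm: chordal}. Your predicted maxima are consistent: with your labeling one gets $A_1 - A_2 = P(G_2,m) - \tfrac{m}{m-1}P(G,m) = \tfrac{(-1)^{b+1}}{m-1}P(C_a,m)$ where $a,b$ are the lengths of the cycle surviving in $G_2$ and the cycle it destroys, so $A_2$ (resp.\ $A_3$) overtakes $A_1$ exactly when the destroyed cycle is even, matching your case analysis, and $P(G_0,m)-A_2$ and $P(G_0,m)-A_5$ do reproduce the displayed formulas in (iii) and (ii). What remains --- the closed form for your $P(G^*,m)$ and the check that $A_4,A_5$ never exceed the claimed maximum in cases (i) and (iii) --- is routine bookkeeping of the same kind the paper itself only writes out for case (iii).
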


\begin{proof}
Note that for Statement~(i) the desired result follows when $k=l=1$ by Theorem~\ref{thm: chordal}.  Since the proof of what remains consists of similar applications of Lemma~\ref{lem: DPpathdelete}, we only present the proof of Statement~(iii).

Since the result is clear when $m=2$, we suppose that $m \geq 3$.  Let $\alpha_1$ and $\alpha_2$ be the vertices in $V(H) \cap V(M)$, and let $\alpha_3$ be the vertex in $V(M)-V(H)$ that is adjacent to $\alpha_2$.  Notice $\alpha_1, \alpha_2, \alpha_3$ is a path of length two in $G$ and $\alpha_1 \alpha_3 \notin E(G)$. Also, suppose that $\mathcal{H} = (L,H)$ is an arbitrary $m$-fold cover of $G$.  To prove the desired result, we first show that $P_{DP}(G, \mathcal{H}) \geq \frac{1}{m}[(m-1)^{2k+2l+2}-(m-1)^{2k} - (m-1)^{2l+2} + 1]$.  If $uv \in E(G)$, we will assume that $E_H(L(u),L(v))$ is a perfect matching since adding edges to $H$ can only make the number of $\mathcal{H}$-colorings of $G$ smaller.

Now, we define $e_1$, $e_2$, $G_0$, $G_1$, $G_2$, $G^*$, and $\mathcal{H}'$ as they are defined in the statement of Lemma~\ref{lem: DPpathdelete}.  Since $G_0$ is a path (and hence a tree), Proposition~\ref{pro: treeDP} implies there is a natural bijection between the $\mathcal{H}'$-colorings of $G_0$ and the proper $m$-colorings of $G_0$.  So, the hypotheses of Lemma~\ref{lem: DPpathdelete} are met.  We use basic facts about chromatic polynomials to compute:
\begin{align*}
&A_1 = m(m-1)^{2k+2l} - \frac{1}{m(m-1)} [(m-1)^{2k+1}-(m-1)][(m-1)^{2l+2}+(m-1)], \\
&A_2 = m(m-1)^{2k+2l} - (m-1)^{2k+2l+1} + (m-1)^{2l+1} + \frac{1}{m-1} P(G,m), \\
&A_3 = m(m-1)^{2k+2l} - (m-1)^{2k+2l+1} + (m-1) + \frac{1}{m-1} P(G,m), \\
&A_4 = 2(m-1)^{2k+2l} - (m-1)^{2l} - 1 - \frac{1}{m}[(m-1)^{2k}-1][(m-1)^{2l}+(m-1)], \; \text{and} \\
&A_5 = 2(m-1)^{2k+2l} - (m-1)^{2l} - 1 - \frac{1}{m}[(m-1)^{2k}-1][(m-1)^{2l}-1]
\end{align*}
It is immediately clear that $A_2 > A_3$ and $A_5 > A_4$.  It is easy to verify that $A_2 > A_1$ and $A_2 > A_5$.  This means $\max \{A_1, A_2, A_3, A_4, A_5 \} = A_2$.  So, Lemma~\ref{lem: DPpathdelete} implies that
$$P_{DP}(G, \mathcal{H}) \geq P(G_0,m) - A_2 = \frac{1}{m}[(m-1)^{2k+2l+2}-(m-1)^{2k} - (m-1)^{2l+2} + 1].$$
Finally, Lemma~\ref{lem: DPpathdelete} also tells us that there is an $m$-fold cover of $G$, $\mathcal{H}^*$, such that $P_{DP}(G,\mathcal{H}^*)$ equals the lower bound above.  The desired result immediately follows.
\end{proof}

\section{DP Color Function of $K_p \vee G$} \label{join}

In this section we study the question whether taking the join of an arbitrary graph with an appropriate clique makes the chromatic polynomial equal to the DP color function. It is easy to see that for any graph $G$, $P(K_p \vee G,m) = \left(\prod_{i=0}^{p-1} (m-i) \right) P(G, m-p)$.  We will now prove Theorem~\ref{thm: joinbound} which we restate.

\begin{customthm} {\bf \ref{thm: joinbound}}
Suppose $G$ is a graph with $\col(G) \geq 3$.  Then for $m \geq \col(G) + 3$,
$$P_{DP}(K_1 \vee G, m) \geq \min \{P(K_1 \vee G,m), mP_{DP}(G, m-1) + 2(m-\col(G)-2)^{|V(G)|-2} \}.$$
\end{customthm}

Throughout this Section, assume $G$ is a graph with $\col(G)=d \geq 3$, and suppose that $v_1, v_2, \ldots, v_n$ is an ordering of the vertices of $G$ such that $v_i$ has at most $d-1$ neighbors preceding it in the ordering.  Also, let $M = K_1 \vee G$, and suppose that $w$ is the vertex corresponding to the copy of $K_1$ used to form $M$.

We will suppose that $\mathcal{H} = (L,H)$ is an $m$-fold cover of $M$ with $m \geq d+3$, and we will assume that $E_H(L(u),L(v))$ is a perfect matching whenever $uv \in E(M)$.  We refer to the edges of $H$ connecting distinct parts of the partition $\{L(v) : v \in V(G) \}$ as \emph{cross-edges}.  We are interested in bounding $P_{DP}(M, \mathcal{H})$ from below.  We may suppose that $L(w) = \{(w,j) : j \in [m] \}$.  For each $j \in [m]$ and $v \in V(G)$, let
$$H^{(j)} = H - N_H[(w,j)] \; \; \text{and} \; \; L^{(j)}(v) = L(v) - \{u : u \in L(v) \cap N_H((w,j)) \}.$$
Then, $\mathcal{H}^{(j)} = (L^{(j)},H^{(j)})$ is an $(m-1)$-fold cover of $G$.  We say that $(w,t) \in L(w)$ is a \emph{level vertex} if $H^{(t)}$ contains precisely $|E(G)|(m-1)$ cross-edges (i.e. $H^{(t)}$ contains the maximum possible number of cross-edges).  We will now prove two lemmas that will immediately imply Theorem~\ref{thm: joinbound}.

\begin{lem} \label{lem: levelvertex1}
If $L(w)$ contains at least $m-1$ level vertices, then there is a natural bijection between the $\mathcal{H}$-colorings of $M$ and the proper $m$-colorings of $M$.  Consequently $P_{DP}(M, \mathcal{H})=P(M,m)$.
\end{lem}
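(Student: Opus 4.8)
The plan is to show that when $L(w)$ contains at least $m-1$ level vertices, one can relabel the colors in each part $L(v)$ so that the cover $\mathcal{H}$ becomes the ``identity'' cover, i.e.\ $(u,j)$ is adjacent to $(v,j)$ in $H$ for every edge $uv \in E(M)$ and every $j \in [m]$. Once this is established, the existence of the natural bijection is immediate from the definition, and then $P_{DP}(M,\mathcal{H}) = P(M,m)$ because the number of $\mathcal{H}$-colorings of $M$ equals the number of proper $m$-colorings of $M$, while $P_{DP}(M,m) = P(M,m)$ always fails to be strict here since $P_{DP}(M,m) \le P(M,m)$ and we have produced a cover meeting this bound.

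First I would exploit the structure of a level vertex $(w,t)$: by definition $H^{(t)}$ has the maximum $|E(G)|(m-1)$ cross-edges, which forces that for \emph{every} edge $uv \in E(G)$, the matching $E_H(L(u),L(v))$ restricted to $L^{(t)}(u) \times L^{(t)}(v)$ is still a perfect matching between these $(m-1)$-element sets — no cross-edge of $H$ between $L(u)$ and $L(v)$ is ``wasted'' by having an endpoint in $N_H((w,t))$. Equivalently, the single edge of the matching $E_H(L(u),L(v))$ incident to the vertex of $L(u) \cap N_H((w,t))$ has its other endpoint in $L(v) \cap N_H((w,t))$. Since $w$ is adjacent to every vertex of $G$, the set $S_t := \{\,u \in L(v)\cap N_H((w,t)) : v \in V(G)\,\} \cup \{(w,t)\}$ is then an independent transversal-in-waiting: $S_t \setminus \{(w,t)\}$ picks exactly one vertex from each $L(v)$, and the level condition says these picked vertices are pairwise non-adjacent across every edge of $G$ — so $S_t$ is in fact an $\mathcal{H}$-coloring of $M$ (using that $H[L(v)]$ is complete, each $L(v)$ contributes exactly one). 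The key structural consequence: distinct level vertices $(w,t), (w,t')$ give $\mathcal{H}$-colorings $S_t, S_{t'}$ that are disjoint on each $L(v)$ (they are determined by the matching from $(w,t)$ resp.\ $(w,t')$ into $L(v)$, and these are different since $(w,t),(w,t')$ are different and $E_H(L(w),L(v))$ is a matching).

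Now I would run the counting/relabeling argument. With $m-1$ level vertices $(w,t_1),\dots,(w,t_{m-1})$ we obtain $m-1$ $\mathcal{H}$-colorings $S_{t_1},\dots,S_{t_{m-1}}$ that are pairwise disjoint inside every part $L(v)$; together with $(w,t_i) \mapsto$ (its matched partner in $L(v)$) this accounts for $m-1$ of the $m$ vertices of each $L(v)$, and the remaining vertex of $L(w)$, say $(w,t_m)$, is matched to the remaining vertex of each $L(v)$ — call this $S_{t_m}$, which one checks is also an $\mathcal{H}$-coloring by the matching property of $E_H(L(v),L(v'))$ (the last unused vertex on each side must match to the last unused vertex on the other side). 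So all $m$ vertices of $L(w)$ are level, and we have $m$ pairwise-(partwise-)disjoint $\mathcal{H}$-colorings $S_{t_1},\dots,S_{t_m}$ partitioning $V(H)$. Relabel: for each $v \in V(M)$ let $(v,j)$ denote the unique vertex of $S_{t_j} \cap L(v)$. Then for $uv \in E(M)$, the vertices $(u,j)$ and $(v,j)$ both lie in $S_{t_j}$, which is independent, so they are \emph{not} adjacent — but this is backwards, so instead I define the labeling so that $(u,j)\sim(v,j)$: since $E_H(L(u),L(v))$ is a perfect matching and the $S_{t_j}$'s are independent sets partitioning the vertices, the matching induces a bijection $\sigma_{uv}$ on $[m]$ with $\sigma_{uv}(j) \ne j$ whenever the pair lies in a common $S$; one shows by consistency around the picked colorings that choosing the labeling of $L(v)$ (relative to a fixed labeling of $L(w)$) to follow the matchings along a spanning tree of $M$ makes $(u,j)\sim(v,j)$ for tree edges, and the level-vertex structure forces the same for the non-tree edges. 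The main obstacle is exactly this last consistency step — verifying that once the labeling is fixed along a spanning tree, the perfect-matching-plus-$m$-disjoint-colorings structure leaves no freedom and forces every remaining edge to also be the identity matching; I expect this to follow by a counting argument showing each $S_{t_j}$ must be $\{(v,j) : v \in V(M)\}$ under the tree labeling, since the $S_{t_j}$ are forced to be the $m$ ``diagonal'' independent sets. Given that, the natural bijection follows from the definition and the final equality $P_{DP}(M,\mathcal{H}) = P(M,m)$ is immediate.
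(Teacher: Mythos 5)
There is a genuine error at the heart of your argument, and it is a sign error. You correctly identify the key structural consequence of the level condition: for a level vertex $(w,t)$ and any $uv \in E(G)$, the edge of the matching $E_H(L(u),L(v))$ incident to the $H$-neighbor of $(w,t)$ in $L(u)$ must have its other endpoint at the $H$-neighbor of $(w,t)$ in $L(v)$ (otherwise two matching edges are lost and $H^{(t)}$ has at most $|E(G)|(m-1)-1$ cross-edges). But in the very next sentence you conclude that these picked vertices are ``pairwise non-adjacent across every edge of $G$'' and hence that $S_t$ is an $\mathcal{H}$-coloring. This is the opposite of what you just proved: the picked vertices are pairwise \emph{adjacent} across every edge of $G$, and moreover every element of $S_t\setminus\{(w,t)\}$ is by construction a neighbor of $(w,t)$, so $S_t$ is as far from independent as possible. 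Everything downstream of this — the $m$ pairwise-disjoint $\mathcal{H}$-colorings partitioning $V(H)$, the ``diagonal'' independent sets, the spanning-tree relabeling — is built on this false premise, and you yourself flag the resulting consistency step for non-tree edges as an unresolved obstacle.

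The irony is that the fact you proved, read with the correct sign, finishes the lemma immediately and is exactly the paper's argument: define $(v,j)$ to be the $H$-neighbor of $(w,j)$ in $L(v)$ (well-defined since $E_H(L(w),L(v))$ is a perfect matching). For each level index $t$ and each $uv\in E(G)$, your structural fact says precisely that $(u,t)$ and $(v,t)$ are adjacent in $H$. With $m-1$ level indices this holds for all $j\in[m-1]$, and since $E_H(L(u),L(v))$ is a perfect matching the remaining vertices $(u,m)$ and $(v,m)$ are forced to be matched to each other as well. Edges incident to $w$ satisfy the condition by the definition of the labels. This exhibits the natural bijection directly — no transversals, no spanning tree, no consistency check over non-tree edges — and the equality $P_{DP}(M,\mathcal{H})=P(M,m)$ then follows from the bijection between $\mathcal{H}$-colorings and proper $m$-colorings.
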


\begin{proof}
We may suppose without loss of generality that $(w,1), (w,2), \ldots, (w,m-1)$ are level vertices.  For each $v \in V(G)$ and $j \in [m]$, call the vertex in $L(v)$ that is adjacent to $(w,j)$ in $H$: $(v,j)$.

Now, we claim that whenever $uv \in E(M)$, $(u,j)$ and $(v,j)$ are adjacent in $H$ for each $j \in [m]$.  This is clear when $u = w$ or $v = w$.  So, suppose $uv \in E(G)$.  For the sake of contradiction, suppose that $(u,t)$ and $(v,t)$ are not adjacent in $H$ for some $t \in [m]$.

Suppose $t \in [m-1]$.  For any $xy \in E(G)$ note that removing one vertex from $L(x)$ and removing one vertex from $L(y)$ deletes one or two edges from $E_H(L(x),L(y))$.  If $(u,t)$ and $(v,t)$ are not adjacent in $H$, then $|E_{H^{(t)}}(L^{(t)}(u),L^{(t)}(v))|=m-2$.  This implies that $H^{(t)}$ contains at most $|E(G)|(m-1)-1$ cross-edges which contradicts the fact that $(w,t)$ is a level vertex.

So, if $(u,t)$ and $(v,t)$ are not adjacent in $H$, we may assume that $t=m$.  By what we just showed, we know that $(u,j)$ and $(v,j)$ are adjacent for each $j \in [m-1]$.  Since $E_H(L(u),L(v))$ is a perfect matching, it must be that $(u,m)$ is adjacent to $(v,m)$ in $H$.  This is a contradiction, and our proof is complete.
\end{proof}

\begin{lem} \label{lem: levelvertex2}
Let $\col(G)=d$.  If $L(w)$ contains $s$ vertices that are not level vertices, then
$$P_{DP}(M, \mathcal{H}) \geq m P_{DP}(G,m-1) + s(m-d-2)^{n-2}.$$
\end{lem}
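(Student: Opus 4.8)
The plan is to decouple $M=K_1\vee G$ along the color assigned to the apex vertex $w$, and then handle level and non-level colors separately. First I would record the identity behind everything. Since $E_H(L(w),L(v))$ is a perfect matching for every $v\in V(G)$, each $\mathcal{H}^{(j)}=(L^{(j)},H^{(j)})$ is a genuine $(m-1)$-fold cover of $G$, and an $\mathcal{H}$-coloring of $M$ using $(w,j)$ is — by independence — exactly an $\mathcal{H}^{(j)}$-coloring of $G$ with $(w,j)$ adjoined. Hence
$$P_{DP}(M,\mathcal{H})=\sum_{j=1}^{m}P_{DP}(G,\mathcal{H}^{(j)})\ \ge\ \sum_{j=1}^{m}P_{DP}(G,m-1)=mP_{DP}(G,m-1),$$
using $P_{DP}(G,\mathcal{H}^{(j)})\ge P_{DP}(G,m-1)$. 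So it suffices to show that every non-level $(w,j)$ improves its term by an additive $(m-d-2)^{n-2}$; that is, $P_{DP}(G,\mathcal{H}^{(j)})\ge P_{DP}(G,m-1)+(m-d-2)^{n-2}$ whenever $(w,j)$ is not a level vertex.

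Second I would unpack what ``not level'' means. If $(w,j)$ is not level, some edge $xy\in E(G)$ has $|E_{H^{(j)}}(L^{(j)}(x),L^{(j)}(y))|\le m-2$. Passing from $H$ to $H^{(j)}$ removes exactly one vertex from each of $L(x)$ and $L(y)$, namely their $H$-neighbors of $(w,j)$, and deleting one vertex from each side of a perfect matching of size $m$ destroys at most two matching edges; so this matching in fact has size exactly $m-2$, leaving one unmatched vertex $a\in L^{(j)}(x)$ and one unmatched vertex $b\in L^{(j)}(y)$. By construction $a$ has no $H^{(j)}$-neighbor in $L^{(j)}(y)$, $b$ has no $H^{(j)}$-neighbor in $L^{(j)}(x)$, and $ab\notin E(H^{(j)})$. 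Let $\mathcal{H}^{(j)}_1$ be obtained from $\mathcal{H}^{(j)}$ by inserting the edge $ab$. Then $\mathcal{H}^{(j)}_1$ is still an $(m-1)$-fold cover of $G$ — the only matching condition it affects, the one for $xy$, turns from a size-$(m-2)$ matching with $a,b$ free into a perfect matching — so $P_{DP}(G,\mathcal{H}^{(j)}_1)\ge P_{DP}(G,m-1)$; and the $\mathcal{H}^{(j)}_1$-colorings are precisely the $\mathcal{H}^{(j)}$-colorings not containing both $a$ and $b$. Therefore
$$P_{DP}(G,\mathcal{H}^{(j)})=P_{DP}(G,\mathcal{H}^{(j)}_1)+N\ \ge\ P_{DP}(G,m-1)+N,$$
where $N$ counts the $\mathcal{H}^{(j)}$-colorings that contain both $a$ and $b$.

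Third I would bound $N$ from below by a greedy count, in the spirit of Proposition~\ref{pro: lower}. Set $G'=G-\{x,y\}$; since $\col(G')\le\col(G)=d$, fix an ordering $u_1,\dots,u_{n-2}$ of $V(G')$ in which each $u_i$ has at most $d-1$ earlier neighbors. Start from the conflict-free choice $a$ at $x$ and $b$ at $y$ (conflict-free because $ab\notin E(H^{(j)})$) and extend greedily along $u_1,\dots,u_{n-2}$: when $u_i$ is reached, at most $d-1$ vertices of $L^{(j)}(u_i)$ are $H^{(j)}$-adjacent to choices made at earlier $u$'s, and at most one more is forbidden by the choice at $x$ and at most one more by the choice at $y$, hence at most $d+1$ vertices of $L^{(j)}(u_i)$ are unavailable; since $|L^{(j)}(u_i)|=m-1$ and $m\ge d+3$, at least $m-d-2\ge 1$ choices remain. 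Distinct choice-sequences yield distinct $\mathcal{H}^{(j)}$-colorings containing $a$ and $b$, so $N\ge(m-d-2)^{n-2}$. Combining this with the displays above and summing over $j$ — the plain bound for the $m-s$ level colors, the improved one for the $s$ non-level colors — gives $P_{DP}(M,\mathcal{H})\ge mP_{DP}(G,m-1)+s(m-d-2)^{n-2}$.

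The step I expect to demand the most care is the matching analysis of the second paragraph: checking that a non-level color forces a single clean one-edge defect with a unique unmatched pair $a,b$ on the two sides, and that reinserting the edge $ab$ gives back a bona fide $(m-1)$-fold cover, so that $P_{DP}(G,\mathcal{H}^{(j)}_1)\ge P_{DP}(G,m-1)$ is legitimate. The decoupling identity is immediate, and the greedy count is a direct adaptation of Proposition~\ref{pro: lower} with two additional forbidden vertices per step.
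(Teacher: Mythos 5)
Your proposal is correct and follows essentially the same route as the paper: decompose $P_{DP}(M,\mathcal{H})$ as $\sum_{j}P_{DP}(G,\mathcal{H}^{(j)})$, use the non-level condition to locate an edge whose matching in $H^{(j)}$ has size $m-2$ with an unmatched pair, add that missing edge to obtain a genuine $(m-1)$-fold cover (bounding the colorings avoiding the pair by $P_{DP}(G,m-1)$), and greedily count at least $(m-d-2)^{n-2}$ colorings using the pair via an ordering in which every remaining vertex has at most $d+1$ earlier neighbors. The only differences are notational (the paper reorders $V(G)$ with the two endpoints first rather than restricting to $G-\{x,y\}$, which yields the same $d+1$ bound).
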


\begin{proof}
Suppose without loss of generality that $(w,i)$ is not a level vertex for each $i \in [s]$.  Clearly,
$$P_{DP}(M, \mathcal{H}) = \sum_{j=1}^{m} P_{DP}(G, \mathcal{H}^{(j)}) \geq m P_{DP}(G, m-1).$$
To complete the proof we will show that $P_{DP}(G, \mathcal{H}^{(i)}) \geq P_{DP}(G,m-1) + (m-d-2)^{n-2}$ for each $i \in [s]$.

Suppose that $t \in [s]$.  Since $(w,t)$ is not a level vertex, we know that $H^{(t)}$ contains less than $|E(G)|(m-1)$ cross-edges.  So, there exists $v_q, v_r \in V(G)$ such that $q > r$, $v_q v_r \in E(G)$, and $|E_{H^{(t)}}(L^{(t)}(v_q),L^{(t)}(v_r))| = m-2$.  This means there is an $x \in L^{(t)}(v_q)$ and a $y \in L^{(t)}(v_r)$ such that $x$ and $y$ are not saturated by $E_{H^{(t)}}(L^{(t)}(v_q),L^{(t)}(v_r))$.

Let $H^*$ be the graph obtained from $H^{(t)}$ be drawing an edge between $x$ and $y$.  Then, $\mathcal{H}^* = (L^{(t)},H^*)$ is an $(m-1)$-fold cover of $G$, and $P_{DP}(G, \mathcal{H}^*)$ is the number of $\mathcal{H}^{(t)}$-colorings of $G$ that do not include both $x$ and $y$.  Hence, there are at least $P_{DP}(G,m-1)$ $\mathcal{H}^{(t)}$-colorings of $G$ that do not include both $x$ and $y$.

Now, we know that $v_1, v_2, \ldots, v_n$ is an ordering of the vertices of $G$ such that $v_i$ has at most $d-1$ neighbors preceding it in the ordering.  Consider the following ordering of the vertices of $G$:
$$v_r, v_q, v_1, v_2, \ldots, v_{r-1}, v_{r+1}, \ldots, v_{q-1}, v_{q+1}, \ldots, v_n.$$
In this ordering each vertex has at most $d+1$ neighbors preceding it in the ordering.  Thus, there are at least $((m-1)-(d+1))^{n-2} = (m-d-2)^{n-2}$ $\mathcal{H}^{(t)}$-colorings of $G$ that include both $x$ and $y$.  This immediately implies $P_{DP}(G, \mathcal{H}^{(t)}) \geq P_{DP}(G,m-1) + (m-d-2)^{n-2}$.
\end{proof}

Having proven Theorem~\ref{thm: joinbound}, we have the following Corollary that will allow us to easily prove Theorem~\ref{thm: joinasy}.

\begin{cor} \label{cor: joinbound}
Suppose $G$ is a graph with $n$ vertices and $\col(G) \geq 3$.  Then, for any $p \in \N$ and $m \geq \col(G) + 2 + p$,
$$P_{DP}(K_p \vee G, m) \geq \min \left \{P(K_p \vee G,m), \left(\prod_{j=0}^{p-1} (m-j) \right) P_{DP}(G, m-p) + f(m) \right \}$$
where $f(m)$ is a polynomial in $m$ of degree $n-3+p$ with a leading coefficient of $2p$.
\end{cor}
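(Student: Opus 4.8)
The plan is to induct on $p$, using Theorem~\ref{thm: joinbound} together with the identity $K_p \vee G = K_1 \vee (K_{p-1} \vee G)$ to peel off one apex vertex at a time. The base case $p=1$ is precisely Theorem~\ref{thm: joinbound} with $f(m) = 2(m-\col(G)-2)^{|V(G)|-2}$, which is a polynomial of degree $n-2 = n-3+1$ with leading coefficient $2 = 2\cdot 1$; note that here one uses $\col(K_1 \vee G) = \col(G)+1$ only implicitly, since Theorem~\ref{thm: joinbound} is stated directly in terms of $\col(G)$. For the inductive step, write $M' = K_{p-1}\vee G$, so that $K_p \vee G = K_1 \vee M'$, and observe that $M'$ has $n+p-1$ vertices and $\col(M') = \col(G)+p-1 \geq 3$. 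Applying Theorem~\ref{thm: joinbound} to $M'$ (valid when $m \geq \col(M')+3 = \col(G)+p+2$, which is exactly the hypothesis $m \geq \col(G)+2+p$), we get
\begin{align*}
P_{DP}(K_p \vee G, m) &\geq \min\bigl\{ P(K_p \vee G, m),\ m\, P_{DP}(M', m-1) + 2(m - \col(M') - 2)^{|V(M')|-2} \bigr\} \\
&= \min\bigl\{ P(K_p \vee G, m),\ m\, P_{DP}(K_{p-1}\vee G, m-1) + 2(m-\col(G)-p-1)^{n+p-3} \bigr\}.
\end{align*}

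Next I would feed in the inductive hypothesis for $p-1$ evaluated at $m-1$. The hypothesis (applicable since $m-1 \geq \col(G)+1+p = \col(G)+2+(p-1)$) gives a lower bound on $P_{DP}(K_{p-1}\vee G, m-1)$ as a minimum of two terms; multiplying through by $m$ and distributing the $\min$, the first branch yields $m\, P(K_{p-1}\vee G, m-1)$, and I would then invoke the identity $P(K_p \vee G,m) = m\, P(K_{p-1}\vee G, m-1)$ (the $p=1$ case of the displayed formula $P(K_p\vee G,m) = (\prod_{i=0}^{p-1}(m-i))P(G,m-p)$ applied to $K_{p-1}\vee G$) to see that this branch reproduces $P(K_p\vee G,m)$, consistent with the first term in the $\min$ we are trying to prove. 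The second branch gives $m\bigl[(\prod_{j=0}^{p-2}(m-1-j))P_{DP}(G,m-p) + f_{p-1}(m-1)\bigr] + 2(m-\col(G)-p-1)^{n+p-3}$; since $m\prod_{j=0}^{p-2}(m-1-j) = \prod_{j=0}^{p-1}(m-j)$, the first piece is exactly $(\prod_{j=0}^{p-1}(m-j))P_{DP}(G,m-p)$ as required, and the remaining terms define $f_p(m) := m\, f_{p-1}(m-1) + 2(m-\col(G)-p-1)^{n+p-3}$.

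It remains to check that $f_p$ has the claimed degree and leading coefficient. By induction $f_{p-1}$ has degree $(n-3)+(p-1) = n+p-4$ with leading coefficient $2(p-1)$, so $m\,f_{p-1}(m-1)$ has degree $n+p-3$ with leading coefficient $2(p-1)$; the added term $2(m-\col(G)-p-1)^{n+p-3}$ also has degree $n+p-3$ with leading coefficient $2$, so $f_p$ has degree $n+p-3 = (n-3)+p$ with leading coefficient $2(p-1)+2 = 2p$, completing the induction. One small bookkeeping point to handle carefully is the distribution of the outer multiplication-by-$m$ over the $\min$ (legitimate since $m>0$) and over the nested $\min$ from the inductive hypothesis, together with the elementary fact that $\min\{a, \min\{b,c\}\} = \min\{a,b,c\}$ and that dropping the middle term $mP(K_{p-1}\vee G,m-1)=P(K_p\vee G,m)$ from the three-way minimum (since it duplicates the first term) leaves the desired two-way minimum; this is the only place where one must be slightly attentive, but it is entirely routine. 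The main (and essentially only) obstacle is this careful manipulation of nested minima and the telescoping of the product $\prod_{j=0}^{p-1}(m-j)$ across the induction; there is no genuine difficulty beyond organizing the algebra.
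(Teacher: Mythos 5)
Your proposal is correct and follows essentially the same route as the paper: induction on $p$ via $K_p \vee G = K_1 \vee (K_{p-1}\vee G)$, applying Theorem~\ref{thm: joinbound} to $K_{p-1}\vee G$, feeding in the inductive hypothesis at $m-1$, and setting $f_p(m) = m f_{p-1}(m-1) + 2(m-\col(K_{p-1}\vee G)-2)^{n-3+p}$ with the same degree and leading-coefficient bookkeeping. The only cosmetic difference is that you phrase the case analysis as distributing a nested $\min$ and discarding a dominated (not literally duplicated) middle term, whereas the paper argues directly that in the first branch $P_{DP}(K_{p-1}\vee G,m-1)=P(K_{p-1}\vee G,m-1)$ forces $P_{DP}(K_p\vee G,m)=P(K_p\vee G,m)$; these are the same argument.
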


\begin{proof}
The proof is by induction on $p$.  Notice that the base case is Theorem~\ref{thm: joinbound}.  So, suppose that $p \geq 2$, and the result holds for all natural numbers less than $p$.

Suppose that $m$ satisfies $m \geq \col(G) + 2 + p$.  Since $K_p \vee G = K_1 \vee (K_{p-1} \vee G)$ and $\col(K_{p-1} \vee G) \leq \col(G) + p-1$, Theorem~\ref{thm: joinbound} tells us
$$P_{DP}(K_p \vee G, m) \geq \min \{P(K_p \vee G,m), mP_{DP}(K_{p-1} \vee G, m-1) + 2(m-\col(K_{p-1} \vee G)-2)^{n-3+p} \}.$$
Since $m-1 \geq \col(G) + 2 + p - 1$, the inductive hypothesis tells us
$$P_{DP}(K_{p-1} \vee G, m-1) \geq \min \left \{P(K_{p-1} \vee G, m-1), \left(\prod_{j=1}^{p-1} (m-j) \right) P_{DP}(G, m-p) + f(m) \right \}$$
where $f(m)$ is a polynomial in $m$ of degree $n-4+p$ with a leading coefficient of $2(p-1)$.

In the case that $P(K_{p-1} \vee G, m-1) \leq \left(\prod_{j=1}^{p-1} (m-j) \right) P_{DP}(G, m-p) + f(m)$, we have that $P_{DP}(K_{p-1} \vee G, m-1) =P(K_{p-1} \vee G, m-1)$.  This means that $mP_{DP}(K_{p-1} \vee G, m-1) = m P(K_{p-1} \vee G, m-1)  =P(K_p \vee G,m)$.  So,
$$P_{DP}(K_p \vee G, m) \geq \min \{P(K_p \vee G,m), mP_{DP}(K_{p-1} \vee G, m-1) + 2(m-\col(K_{p-1} \vee G)-2)^{n-3+p} \}$$
implies $P_{DP}(K_p \vee G, m) = P(K_p \vee G,m)$ and the desired result follows.  So, we may assume that $P(K_{p-1} \vee G, m-1) > \left(\prod_{j=1}^{p-1} (m-j) \right) P_{DP}(G, m-p) + f(m)$.  We calculate that:
\begin{align*}
& mP_{DP}(K_{p-1} \vee G, m-1) + 2(m-\col(K_{p-1} \vee G)-2)^{n-3+p} \\
& \geq m \left( \left(\prod_{j=1}^{p-1} (m-j) \right) P_{DP}(G, m-p) + f(m)  \right) + 2(m-\col(K_{p-1} \vee G)-2)^{n-3+p} \\
&= \left(\prod_{j=0}^{p-1} (m-j) \right) P_{DP}(G, m-p) + mf(m) + 2(m-\col(K_{p-1} \vee G)-2)^{n-3+p}.
\end{align*}
If we let $g(m) = mf(m) + 2(m-\col(K_{p-1} \vee G)-2)^{n-3+p}$, then $g(m)$ is a polynomial in $m$ of degree $n-3+p$ with a leading coefficient of $2(p-1)+2=2p$.  Furthermore, we have
$$P_{DP}(K_p \vee G, m) \geq \min \left \{P(K_p \vee G,m), \left(\prod_{j=0}^{p-1} (m-j) \right) P_{DP}(G, m-p) + g(m) \right \}$$
which completes the induction step.
\end{proof}

We now prove Theorem~\ref{thm: joinasy}.

\begin{customthm} {\bf \ref{thm: joinasy}}
Suppose $G$ is a graph on $n$ vertices such that
$$P(G,m)-P_{DP}(G,m) = O(m^{n-3}) \; \; \text{as $m \rightarrow \infty$.}$$
Then, there exist $p,N \in \N$ such that $P_{DP}(G \vee K_p, m) = P(G \vee K_p , m)$ whenever $m \geq N$.
\end{customthm}

\begin{proof}
We may assume that $\col(G) \geq 3$ since the result is obvious when $\col(G) \leq 2$.  We know there are constants $C, N_1 \in \N$ such that
$$P(G,m) - P_{DP}(G,m) \leq Cm^{n-3}$$
whenever $m \geq N_1$.  Fix $p$ as a natural number such that $p > C/2$.  Then, for $m \geq p + N_1$, we have that
\begin{align*}
\left(\prod_{j=0}^{p-1} (m-j) \right) P_{DP}(G, m-p) & \geq \left(\prod_{j=0}^{p-1} (m-j) \right) \left(P(G,m-p) - C(m-p)^{n-3} \right) \\
&= P(K_p \vee G,m) - C(m-p)^{n-3} \prod_{j=0}^{p-1} (m-j).
\end{align*}
Corollary~\ref{cor: joinbound} implies that for $m \geq \col(G) + 2 + p$,
$$P_{DP}(K_p \vee G, m) \geq \min \left \{P(K_p \vee G,m), \left(\prod_{j=0}^{p-1} (m-j) \right) P_{DP}(G, m-p) + f(m) \right \}$$
where $f(m)$ is a polynomial in $m$ of degree $n-3+p$ with a leading coefficient of $2p$.  Since $p > C/2$ we know that
$$f(m) - C(m-p)^{n-3} \prod_{j=0}^{p-1} (m-j)$$
is a polynomial of degree $n-3+p$ with a positive leading coefficient.  Thus, there must be an $N \in \N$ such that
\begin{align*}
P(K_p \vee G,m) &\leq P(K_p \vee G,m) + f(m) - C(m-p)^{n-3} \prod_{j=0}^{p-1} (m-j) \\
& \leq \left(\prod_{j=0}^{p-1} (m-j) \right) P_{DP}(G, m-p) + f(m)
\end{align*}
for each $m \geq N$.  The result follows by Corollary~\ref{cor: joinbound}.
\end{proof}

{\bf Acknowledgment.} The authors would like to thank Alexandr Kostochka for his helpful comments on this paper.  The authors would also like to thank Jade Hewitt, David Spivey, and Seth Thomason for discussions on Question~\ref{ques: mono}.

\end{document}